\def\cgaps#1{}
\def\Cgaps#1{}
\def\undersetbrace#1\to#2{\underbrace{#2}_{#1}}                                                          
\def\oversetbrace#1\to#2{\overbrace{#2}^{#1}}
\def\AMSunderset#1\to#2{\underset{#1}{#2}}
\def\AMSoverset#1\to#2{\overset{#1}{#2}}
\def\o{\circ}
\def\X{\mathfrak X}
\def\al{\alpha}
\def\de{\delta}
\def\ep{\varepsilon}
\def\ze{\zeta}
\def\et{\eta}
\def\th{\theta}
\def\ka{\kappa}
\def\la{\lambda}
\def\rh{\rho}
\def\ph{\varphi}
\def\ps{\psi}
\def\De{\Delta}
\def\i{^{-1}}
\def\x{\times}
\def\p{\partial}
\let\on=\operatorname
\def\AMSonly#1{}
\def\Id{\on{Id}}
\def\R{\mathbb R}
\def\Imm{{\on{Imm}}}
\def\Diff{{\on{Diff}}}
\def\dist{\on{dist}}
\def\todomartin#1{}
\let\wt=\widetilde
\let\ol=\overline
\let\mf=\mathfrak
\theoremstyle{plain}
\newtheorem{thm}[subsection]{Theorem}
\newtheorem*{proposition*}{Proposition}
\newtheorem{lem}[subsection]{Lemma}
\newtheorem*{lem*}{Lemma}
\newtheorem*{cor*}{Corollary}
\newtheorem*{theorem*}{Theorem}
\theoremstyle{definition}
\newtheorem{remark}[subsection]{Remark}
\newtheorem*{remark*}{Remark}
\begin{document}
\title[Reparameterization invariant metrics on spaces of curves]
{Constructing reparameterization invariant metrics on spaces of plane curves}

\author{Martin Bauer, Martins Bruveris, Stephen Marsland, Peter W.\ Michor}

\markboth{M.\ Bauer, M.\ Bruveris, S.\ Marsland, P.W.\ Michor}{}

\begin{abstract}
Metrics on shape spaces are used to describe deformations that take one shape to another, and to define a distance between shapes. We study a family of metrics on the space of curves, which includes several recently proposed metrics, for which the metrics are characterised by mappings into vector spaces where geodesics can be easily computed. This family consists of Sobolev-type Riemannian metrics of order one on the space $\on{Imm}(S^1,\R^2)$ of parameterized plane curves and the quotient space $\on{Imm}(S^1,\R^2)/\on{Diff}(S^1)$ of unparameterized curves. For the space of open parameterized curves we find an explicit formula for the geodesic distance and show that the sectional curvatures vanish on the space of parameterized open curves and are non-negative on the space of unparameterized open curves. For one particular metric we provide a numerical algorithm that computes geodesics between unparameterized, closed curves, making use of a constrained formulation that is 
implemented 
numerically using the RATTLE algorithm. We illustrate the algorithm with some numerical tests between shapes.
\end{abstract}
\keywords{curve matching, elastic metric, geodesic shooting, reparameterization group, Riemannian shape analysis, shape space
}
\subjclass[2010]{58B20, 58D15, 65D18}

\maketitle

\section{Introduction}
The mathematical analysis of shape has been the focus of intense research interest in recent years, not least because of applications in image analysis and computer vision, where methods based on geodesic active contours or `snakes' are used for segmentation, tracking and object recognition \cite{Sundaramoorthi2011, Sundaramoorthi2007}. Another source of applications is biomedical image analysis, where the study and comparison of shapes form a large part of the field of computational anatomy \cite{Glaunes2008, GrMi1998}.

A key problem in shape analysis is to define a distance function between shapes that can measure similarity in a computationally feasible way and act as the basis for object classification. One way to arrive at such a distance function is to equip the space of shapes with a Riemannian metric, which allows the lengths of paths between shapes to be measured. The distance between two shapes can then be defined as the length of the shortest path connecting them.

For the purposes of this paper we consider shapes to be smooth plane curves (open or closed) modulo smooth reparameterizations. A slightly narrower definition would be to define a shape as the outline of a smooth, simply connected domain in the plane; this definition excludes objects like the figure of eight, which we allow in our definition. Mathematically, a shape is represented by a smooth curve $c: S^1 \to \R^2$. To curves $c, d$ represent the same shape
if there exists a reparameterization map $\ph \in \on{Diff}(S^1)$ (that is,  the group of smooth, orientation preserving, invertible maps $\ph:S^1\to S^1$ from the circle onto itself) such that one curve is a reparameterization of the other, i.e., $c = d\circ\ph$. 

We will work with the class of regular or immersed curves $c$. A curve $c$ is regular if it has a non-vanishing tangent, i.e., $c'(\theta)\neq0$.
The diffeomorphism group $\on{Diff}(S^1)$ acts on the space of immersed curves
\[
\on{Imm}(S^1,\R^2):=\left\{c\in C^\infty(S^1,\R^2)|\;c'(\theta)\neq 0\right\}\,,
\]
from the right via $(\ph, c) \mapsto c\circ\ph$. Using this setting we can identify a shape with an equivalence class $[c]$, that is an element of the quotient space $\on{Imm}(S^1,\R^2)/\on{Diff}(S^1)$. This quotient
is the shape space of immersed closed curves modulo reparameterizations and will be denoted by $\mathcal S$. Similarly, one defines the space $\on{Imm}([0,2\pi],\R^2)$ of parameterized open curves and the space $\mathcal S_{\mathrm{open}}$ of open shapes.

To arrive at a distance function on shape space requires two steps. First, we define a Riemannian metric (i.e., an inner product measuring the length of infinitesimal deformations of a curve) on the space of immersed curves and compute geodesics on this space. The deformations $h, k$ are represented by vector fields along the curve $c$ and the inner product, which depends on the curve, is denoted by  $G_c(h,k)$. 
If the metric is invariant under the action of the reparameterization group $\on{Diff}(S^1)$, then it induces a Riemannian metric on shape space $\mathcal S$, which in turn gives rise to the geodesic distance function. The second step is to find the right representatives $c, d$ of the equivalence classes $[c]$ and $[d]$, such that the geodesic distance $\on{dist}^{\on{Imm}}(c, d)$ coincides with $\on{dist}^{\mathcal S}([c], [d])$.

\subsection{Shape metrics and related work\label{sec:lit}}

The simplest reparameterization invariant metric on $\Imm(S^1,\R^2)$ is the $L^2$-metric
\[
G_c(h,k) = \int_{S^1} \langle h, k \rangle \,ds\,,
\]
where we integrate over arc-length, $ds = |c'(\theta)| d\theta$. However, the geodesic distance induced by this metric vanishes, i.e., the distance between any two shapes is 0, which renders it unsuitable for shape analysis \cite{Michor102}.

One way to overcome this is to add terms involving higher derivatives of $h$ and $k$ to the metric, such as:
\[
G_c(h,k) = \int_{S^1} \langle h, k \rangle + A \langle D_s h, D_s k \rangle \,ds \,,
\]
where $D_sh = \tfrac 1{|c'|} h'$ denotes the arc-length derivative of $h$, and $A>0$ is a constant. This leads to the class of Sobolev-type metrics, which were independently introduced in \cite{Charpiat2007, Michor107, Sundaramoorthi2007} and studied further in \cite{Michor119, Shah2006}.

Another family of metrics, the almost local metrics \cite{Michor118,Bauer2012b,Michor98}, prevent the geodesic distance from vanishing by introducing a weight function in the integral. Examples of weight functions involving the curvature or length are $w(\theta) = 1+A \ka(\theta)^2$ and $w(\theta) = \ell(c)\i$.

Sobolev-type metrics of arbitrary order were studied in \cite{Michor125, Mennucci2008, Michor107}. Although they are a natural generalization of the $L^2$-metric from a theoretical point of view, their numerical treatment is rather involved, mainly because the geodesic equation of a Sobolev-type metric of order $k$ is a nonlinear PDE of order $2k$. Interestingly, there are special cases of first order metrics for which the geodesic equation admits explicit solutions \cite{Sundaramoorthi2011, Michor111}. Apart from these special cases, there have been some attempts to solve the geodesic equation directly for Sobolev-type metrics of order 1 for curves \cite{Mio2007} and surfaces \cite{MBMB2011}. Metrics of higher order, on the other hand, are still practically untouched. 

To avoid dealing with PDEs of high order, one can restrict one's attention to Sobolev-type metrics of order one, in particular to the family of metrics known as `elastic metrics' and studied in \cite{Mio2007}, which are of the form
\begin{equation}\label{ab-metric}
 G^{a,b}_c (h,k)= \int_{S^1} a^2\langle D_sh,n\rangle\langle D_sk,n\rangle+b^2 \langle D_sh,v\rangle \langle D_sk,v\rangle\,ds\,,
\end{equation}
with constants $a,b \in \R^+$ and with $v$ and $n$ denoting the unit tangent and normal vectors to $c$. The term involving the normal vector can be seen as measuring the bending of the curve $c$ under the deformation $h$, while the derivative of $h$ in the tangential direction measures the stretching of $c$.

Following ideas of \cite{Younes1998}, it was shown in \cite{Michor111} that it is possible to find explicit formulas for geodesics of the scale-invariant version of the elastic metric with $b^2=a^2$ on the space of parameterized curves modulo translations, rotations and scaling. To achieve this a curve $c$ was represented by the square-root of its velocity vector, $\sqrt{c'}$, with $c'$ being interpreted as a complex number. In this representation the $G^{1,1}$-metric has a particularly simple form that allows for explicit formulas for geodesics between two curves and the length of the geodesic.

A similar motivation underlies the introduction of the square root velocity transform (SRVT) in \cite{Srivastava2011}. The SRVT is a transformation that maps a curve $c$ to $R(c) = {c'}/{\sqrt{|c'|}}$ with the effect that the $G^{a,b}$-metric simplifies for $4b^2=a^2$. While not allowing explicit formulas for geodesics to be written down, this map greatly simplifies their numerical computation.

Related to the SRVT is the $Q$-transform, introduced in \cite{Kurtek2011a}, which maps a curve $c$ to $Q(c) = \sqrt{|c'|}c$. This transform generalizes easily to surfaces, but does present theoretical difficulties, as explained in Section \ref{sec:qtrans}.

There are other ways to define a Riemannian metric on shape space such as  large deformation diffeomorphic metric mapping \cite{Glaunes2008, Cotter2012}, where a Riemannian metric is induced from the diffeomorphism group $\on{Diff}(\R^2)$ of the ambient space, or the use of conformal welding \cite{Sharon06} to represent shapes as diffeomorphisms of the circle.

\subsection{The reparameterization group $\on{Diff}(S^1)$}

In this article we will consider the group of smooth, orientation preserving, invertible maps $\ph:S^1\to S^1$ from the circle onto itself. This group is sometimes denoted by $\on{Diff}^+(S^1)$. 
To shorten notation we will write $\on{Diff}(S^1)$ instead.

After equipping the manifold $\on{Imm}(S^1, \R^2)$ with a Riemannian metric, there are various ways to perform the minimization over the reparameterization group. The problem is challenging because of the nonlinear nature of $\on{Diff}(S^1)$ and the fact that all the spaces involved are infinite-dimensional.
One approach is to replace the infinite-dimensional space $\on{Diff}(S^1)$ by a finite-dimensional one and to perform the minimization over this smaller space. A possible choice for the smaller space, used in \cite{Srivastava2011}, are diffeomorphisms $\ph$ for whom the Fourier series of $\sqrt{\ph'}$ is truncated at a fixed length.

In \cite{Cotter2012} elements of $\on{Diff}(S^1)$ were generated as flows of vector fields. The advantage of using vector fields is that they form a linear space, which enables the use of gradient-based optimization algorithms. However, due to the regularization terms needed to ensure convergence of the algorithm, the computed distance failed to be symmetric. An alternative is given in \cite{Sundaramoorthi2011}, where the authors describe an iterative procedure directly on $\on{Diff}(S^1)$ based on geometric considerations. We will expand on this minimization scheme in Section~\ref{sec:numerics}.

\subsection{Overview of the paper}

This paper arose from the observation that the SRVT \cite{Srivastava2011} and the $Q$-transform \cite{Kurtek2011b, Kurtek2011a, Mani2010}  
are both special cases of a general mathod for constructing reparameterization invariant metrics on spaces of curves.
We show in Section~\ref{sec:relating} how to use this method to obtain the elastic metric with general parameters $4b^2 \geq a^2$ as well as Sobolev metrics of higher order. The idea is to construct a transformation that maps $\on{Imm}(S^1,\R^2)$ isometrically to a submanifold of a flat Riemannian manifold.

In the following we exploit this representation of the space $\on{Imm}(S^1,\R^2)$ to study in particular the family of elastic metrics and their mathematical properties. We chose to focus on these metrics because they -- and especially the one induced by the SRVT -- are currently used in shape analysis.

The embedding of $\on{Imm}(S^1,\R^2)$ into a flat space also allows us to discretize the geodesic equation in a very geometric way. After choosing a discretization of the curve, the infinite-dimensional Riemannian manifold becomes a finite dimensional constrained submanifold of the Euclidean space. In Section~\ref{sec:numerics} we describe this procedure~-- which can be applied to all metrics constructed in Section~\ref{sec:relating}~-- in the particular case of the metric arising from the SRVT. The resulting shooting method to solve the geodesic boundary value problem complements the path-straightening approach used in \cite{Samir2012}.

When computing the distance between unparameterized curves (Section~\ref{sec:unparam}) it is necessary to minimize over the reparameterization group. We give a rigorous interpretation of the iterative procedure proposed in~\cite{Sundaramoorthi2011} as a gradient descent on $\on{Diff}(S^1)$ with respect to a specific Riemannian metric. Reparametrization denotes the sliding of the points representing the curve along it. Numerical problems arise when points are compressed too closely together or stretched too far apart. We will argue that this behaviour is not introduced by the discretization, but is a manifestation of geodesic incompleteness of the underlying Riemannian metric: there exist shapes $[c], [d] \in \mathcal S$ with no geodesic connecting them. We conclude the paper by discussing the repercussions of the incompleteness on applications to shape analysis.

\section{Notation and background material}

\subsection{Notation}

In this section we introduce some notation that we will use throughout the article. 
We study the spaces of open and closed, regular, plane curves 
\begin{align*}
 \on{Imm}([0,2\pi],\R^2)&= \left\{c\in C^\infty([0,2\pi],\R^2)|\;c'(\theta)\neq 0\right\}\,,\\
 \on{Imm}(S^1,\R^2)&=\left\{c\in C^\infty(S^1,\R^2)|\;c'(\theta)\neq 0\right\}\,.
\end{align*}
The elastic metric, that we will consider in this article, has a kernel on the manifold of immersions, which consists exactly of the constant vector fields. 
Therefore we introduce the quotient space of immersions modulo translations $\Imm(M,\R^2)/\on{Tra}$, for $M$ either $S^1$ or $[0,2\pi]$.

We will also need the  diffeomorphism group of both $[0,2\pi]$ and $S^1$, i.e., the groups of smooth, orientation preserving, invertible maps onto itself: 
\begin{align*}
 \on{Diff}([0,2\pi])&= \left\{\ph \in C^\infty([0,2\pi],[0,2\pi]) |\;\ph'(\theta)>0\right\}\,,\\
 \on{Diff}(S^1)&=\left\{\ph\in C^\infty(S^1,S^1)|\;\ph'(\theta)>0\right\}\,.
\end{align*}
These groups are sometimes denoted by $\on{Diff}^+(S^1)$ and $\on{Diff}^+([0,2\pi])$ respectively. 
To shorten notation we will write $\on{Diff}(S^1)$ and $\on{Diff}([0,2\pi])$ instead.

For a  curve $c$ we denote by $v = c'/|c'|$ the unit length tangent vector and by $n$ the unit length normal vector, obtained from $v$ through a rotation by $\tfrac \pi2$. 
The arc-length derivative of a function $f$ along the curve is denoted by $D_s f = \frac{1}{|c'|} f'$ and $ds = |c'| d\theta$ is integration with respect to arc-length. 
The turning angle $\al$ is defined through the relation $v = (\cos \al, \sin \al)$ and the curvature by $\ka=\langle D_s v, n\rangle$.

\subsection{Variational formulas}
In this section we will compute some variational formulae for the quantities that have been introduced in the previous 
section, i.e., we compute how these terms change if we vary the underlying curve $c$.  
For a smooth map $F$ from $\on{Imm}(S^1,\R^2)$ to any convenient vector space  we denote by
\[
dF(c).h = D_{c,h}F = \left.\frac{d}{d t}\right|_{t=0}F(c + th)  = 
\left.\frac{d}{dt}\right|_{t=0}F(\wt c(t,\quad))
\]
the variation  in the direction $h$, 
where $\wt c:\mathbb R\x M\to \mathbb R^2$ is any smooth variation with $\wt c(0,\th)= c(\th)$ and 
$\p_t|_0 \wt c(t,\th)=h(\th)$ for all $\th$.
Examples of maps $F$ include $v$, $n$, $\al$, $|c'|$, $\ka$.

The following  formulae will be used repeatedly throughout the article.
\begin{lem}\label{variational}
The first variations of the unit tangent vector $v$, the normal vector 
$n$, the length element $|c'|$ and the curvature $\ka$ are given by:
\begin{align}
\label{var_v}
D_{c,h} v &= \langle D_s h, n \rangle n \\
\label{var_n}
D_{c,h} n &= -\langle D_s h, n \rangle v \\
D_{c,h} |c'| &= \langle D_s h, v \rangle |c'| 
\label{var_length}\\
D_{c,h} \ka &= \langle D_s^2 h, n \rangle - 2\ka \langle D_s h, v \rangle\,.
\label{var_ka}
\end{align}
\end{lem}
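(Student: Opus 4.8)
The plan is to compute each variation directly from the definitions, treating $\wt c(t,\th)$ as a smooth variation of curves and differentiating at $t=0$. The basic building block is the variation of $|c'|$, since the other three quantities are built from $|c'|$ together with $c'$. Writing $|c'| = \langle c',c'\rangle^{1/2}$, one has $\p_t|_0 |c'| = \langle c', h'\rangle/|c'|$; rewriting $h' = |c'| D_s h$ and $c' = |c'| v$ gives $D_{c,h}|c'| = |c'|\langle v, D_s h\rangle$, which is \eqref{var_length}. A useful consequence, obtained by the chain rule, is the commutator formula $D_{c,h}(D_s f) = D_s(D_{c,h}f) - \langle D_s h, v\rangle D_s f$ for any quantity $f$ depending on $c$; I would record this early, as it is needed for the curvature computation and organizes the bookkeeping.

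Next I would do $v = c'/|c'|$. Differentiating the quotient and substituting \eqref{var_length} yields
\begin{equation*}
D_{c,h} v = \frac{h'}{|c'|} - \frac{\langle v, D_s h\rangle}{|c'|} c' = D_s h - \langle D_s h, v\rangle v.
\end{equation*}
Since $\{v,n\}$ is an orthonormal frame of $\R^2$, the right-hand side is just the component of $D_s h$ orthogonal to $v$, namely $\langle D_s h, n\rangle n$, giving \eqref{var_v}. For $n$, the cleanest route is to note that $n = Jv$ where $J$ is the fixed rotation by $\pi/2$; since $J$ is constant and linear, $D_{c,h} n = J(D_{c,h} v) = \langle D_s h, n\rangle Jn = -\langle D_s h, n\rangle v$, which is \eqref{var_n}. (Alternatively one differentiates the relations $\langle n,n\rangle = 1$ and $\langle v,n\rangle = 0$ to pin down $D_{c,h}n$ up to the already-known tangential part.)

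Finally, for the curvature I would use $\ka = \langle D_s v, n\rangle$ and the product rule:
\begin{equation*}
D_{c,h}\ka = \langle D_{c,h}(D_s v), n\rangle + \langle D_s v, D_{c,h} n\rangle.
\end{equation*}
In the second term, $\langle D_s v, D_{c,h} n\rangle = -\langle D_s h, n\rangle\langle D_s v, v\rangle = 0$ because $|v|=1$ forces $D_s v \perp v$. For the first term I apply the commutator formula with $f = v$: $D_{c,h}(D_s v) = D_s(D_{c,h} v) - \langle D_s h, v\rangle D_s v$, so its inner product with $n$ is $\langle D_s(\langle D_s h, n\rangle n), n\rangle - \ka\langle D_s h, v\rangle$. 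Expanding $D_s(\langle D_s h,n\rangle n)$ by the product rule and using $D_s n = -\ka v \perp n$, the surviving piece is $D_s\langle D_s h, n\rangle$; then $D_s\langle D_s h,n\rangle = \langle D_s^2 h, n\rangle + \langle D_s h, D_s n\rangle = \langle D_s^2 h, n\rangle - \ka\langle D_s h, v\rangle$. Collecting the two contributions of $-\ka\langle D_s h, v\rangle$ gives \eqref{var_ka}.

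None of the steps is deep; the only real care is needed in the curvature formula, where one must correctly juggle the commutator between $D_{c,h}$ and $D_s$ (these do not commute, precisely because $D_s$ involves $|c'|$) and repeatedly exploit that $D_s v = \ka n$ and $D_s n = -\ka v$. I expect that bookkeeping — keeping tangential and normal components straight and not dropping the $\langle D_s h, v\rangle$ terms produced by the commutator — to be the main obstacle, everything else being a direct unwinding of definitions.
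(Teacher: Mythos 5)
Your computations are correct, and the argument is complete: the variation of $|c'|$, the commutator relation $D_{c,h}(D_s f) = D_s(D_{c,h}f) - \langle D_s h, v\rangle D_s f$, and the orthogonality facts $D_s v = \ka n$, $D_s n = -\ka v$ are exactly what is needed, and your bookkeeping in the curvature formula (the two separate contributions of $-\ka\langle D_s h, v\rangle$) is right. The paper itself does not prove the lemma but refers to \cite{Michor107}; the proof there is essentially the same direct computation you give, so your proposal matches the standard argument rather than deviating from it.
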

\begin{proof}
The proof of these formulae can be found for example in  \cite{Michor107}. 
\end{proof}

\section{Constructing reparameterization invariant metrics via shape transformations\label{sec:relating}}

In this section we describe a general method of constructing reparameterization invariant metrics on the space of plane curves. This method will include the metrics studied by Srivastava et al. \cite{Srivastava2011}, Mio et al. \cite{Mio2007}, the analogue for curves of the surface metric from Kurtek et al. \cite{Kurtek2011b, Kurtek2011a} and Younes et al. \cite{Michor111} as special cases.

Let us consider a general transform $F: \Imm(S^1,\R^2)\rightarrow C^\infty(S^1,\R^n)$ mapping plane curves to $\R^n$-valued functions for some $n \in \mathbb N$. On $C^\infty(S^1,\R^n)$ we consider the following $L^2$-metric,
\begin{equation}
\label{eq:l2_flat}
G^{L^2}_q(h, h) = \int_{S^1} |h(\th)|^2 d \th\,,
\end{equation}
for $q \in C^\infty(S^1,\R^n)$, $h \in T_q C^\infty(S^1,\R^n)$. This is a particularly simple Riemannian metric: it does not depend on the basepoint $q$; it is a flat metric in the sense of Riemannian geometry with geodesic distance  given by the $L^2$-norm
\[
\on{dist}^{L^2}(q_0, q_1)^2 = \int_{S^1} |q_0(\th) - q_1(\th)|^2 d \th\,.
\]
We will define the metric $G^F$ on $\on{Imm}(S^1,\R^2)$ to be the pullback of the $L^2$-metric under the transform $F$:
\[
G_c^F(h,h) = G_{F(c)}^{L^2}(D_{c,h}F, D_{c,h} F) = \int_{S^1} \left| D_{c,h} F \right|^2 d \th\,.
\]
To obtain a re\-pa\-ra\-mete\-ri\-sa\-tion invariant metric $G^F$ we need the transform $F$ to satisfy an equivariance property.

\begin{thm}
\label{thm:reparam}
If $F: \Imm(S^1,\R^2)\rightarrow C^\infty(S^1,\R^n)$ satisfies
\begin{equation}
\label{equivariance}
 F(c\circ\ph)=\sqrt{\ph'}F(c)\circ\ph,
\end{equation}
with $c \in \Imm(S^1, \R^2)$ and $\ph \in \Diff(S^1)$ and if $F$ is infinitesimally injective, i.e., $D_{c,\cdot} F$ is injective for all $c$, then $G^F$ is a Riemannian metric on $\on{Imm}(S^1,\R^2)$ that is invariant under the reparameterization group $\Diff(S^1)$.
\end{thm}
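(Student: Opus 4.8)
The plan is to verify the two defining properties of a Riemannian metric—namely that $G^F_c$ is a positive-definite symmetric bilinear form on each tangent space $T_c\on{Imm}(S^1,\R^2)$, with smooth dependence on $c$—and then to establish invariance under $\on{Diff}(S^1)$ directly from the equivariance relation \eqref{equivariance}. First I would observe that $G^F_c$ is by construction the pullback of the bilinear form $G^{L^2}$ under the linear map $D_{c,\cdot}F : T_c\on{Imm}(S^1,\R^2) \to C^\infty(S^1,\R^n)$, so bilinearity and symmetry are automatic. Positivity is where infinitesimal injectivity enters: $G^F_c(h,h) = \int_{S^1} |D_{c,h}F|^2\,d\th \geq 0$, and equality forces $D_{c,h}F = 0$, hence $h = 0$ by the assumed injectivity of $D_{c,\cdot}F$. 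Smoothness of $c \mapsto G^F_c$ follows because $F$ is smooth and differentiation is a smooth operation in the convenient calculus setting, so $(c,h) \mapsto D_{c,h}F$ is smooth and the $L^2$-pairing is bounded bilinear.

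For the invariance, the key computation is to differentiate the equivariance identity \eqref{equivariance} in the direction of a tangent vector $h$ at $c$. Writing $R_\ph(c) = c\circ\ph$ for the right action, I would compute $D_{c,h}(F\circ R_\ph)$ in two ways. On one hand, by the chain rule it equals $(D_{R_\ph(c),\cdot}F)(h\circ\ph)$. On the other hand, differentiating the right-hand side $\sqrt{\ph'}\,(F(c)\circ\ph)$—which is linear in $F(c)$ for fixed $\ph$—gives $\sqrt{\ph'}\,\big((D_{c,h}F)\circ\ph\big)$. Thus
\[
(D_{c\circ\ph,\,h\circ\ph}F) = \sqrt{\ph'}\,\big((D_{c,h}F)\circ\ph\big).
\]
Then I would substitute this into the definition of $G^F$ at the point $c\circ\ph$ applied to the tangent vector $T_c R_\ph \cdot h = h\circ\ph$, obtaining
\[
G^F_{c\circ\ph}(h\circ\ph, h\circ\ph) = \int_{S^1} \big|\sqrt{\ph'(\th)}\,(D_{c,h}F)(\ph(\th))\big|^2\,d\th = \int_{S^1} |(D_{c,h}F)(\ph(\th))|^2\,\ph'(\th)\,d\th,
\]
and a change of variables $\sigma = \ph(\th)$, $d\sigma = \ph'(\th)\,d\th$ (valid since $\ph$ is an orientation-preserving diffeomorphism of $S^1$) turns this into $\int_{S^1}|(D_{c,h}F)(\sigma)|^2\,d\sigma = G^F_c(h,h)$. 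Polarization then gives the identity for general pairs $h,k$, which is precisely the statement that $R_\ph$ is an isometry, i.e., $G^F$ is $\on{Diff}(S^1)$-invariant.

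I do not expect a serious obstacle here; the argument is essentially a bookkeeping exercise once the equivariance identity is differentiated correctly. The one point requiring a little care is the interchange of the derivative $D_{c,h}$ with the composition operator $\ph^*: q \mapsto q\circ\ph$ and with multiplication by the fixed function $\sqrt{\ph'}$—both are bounded linear operators on $C^\infty(S^1,\R^n)$, so the differentiation passes through them, but this should be stated cleanly rather than taken silently. The only other subtlety is the factor $\sqrt{\ph'}$, whose square produces exactly the Jacobian $\ph'$ needed to absorb the change of variables; this is the structural reason the weight in \eqref{equivariance} is a square root rather than some other power, and it is worth remarking on since it motivates the precise form of the equivariance condition.
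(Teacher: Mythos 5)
Your proposal is correct and follows essentially the same route as the paper's proof: differentiate the equivariance identity \eqref{equivariance} to obtain $D_{c\circ\ph,\,h\circ\ph}F = \sqrt{\ph'}\,(D_{c,h}F)\circ\ph$, substitute into the definition of $G^F$, and absorb the factor $\ph'$ by the change of variables $\th\mapsto\ph(\th)$. The only difference is that you also spell out positive-definiteness (via infinitesimal injectivity), symmetry and smoothness, which the paper leaves implicit, and this is a reasonable addition rather than a different argument.
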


\begin{proof}
The infinitesimal version of the equivariance property is
\[ D_{c\circ\ph,h\circ\ph}F = \sqrt{\ph'} \left(D_{c,h}F\right) \circ \ph \]
and thus we see that
\begin{align*}
 G_{c\circ\ph}^F(h\circ\ph,h\circ\ph) &= \int_{S^1} |D_{c\circ \ph, h \circ \ph} F|^2\,d\theta \\
&= \int_{S^1} \left| \left(D_{c, h} F  \right) \circ\ph\right|^2 |\ph'| \,d\theta = G_c^F(h,h)\,,
\end{align*}
i.e., the metric $G^F$ is invariant under $\Diff(S^1)$.
\end{proof}
\begin{remark}\label{Rem:equivariance}
Examples of such transforms can be constructed in the following way: Take a smooth function $f \in C^\infty(\R^{2m}, \R^n)$ and define the transform $F$ as
\begin{equation}
\label{f_transform}
F(c) = \sqrt{|c'|}\, f \o (c, D_s c, \ldots, D_s^{m-1} c)\,.
\end{equation}
To see that $F$ indeed satisfies the required invariance property, note that
\[
\sqrt{|(c\o\ph)'|} = \left(\sqrt{|c'|}\o\ph\right) \sqrt{\ph'}
\]
and that the arc-length derivative is equivariant with respect to reparameterizations. If we write $D_c$ for the arc-length derivative to emphasize the dependence on the curve $c$, the equivariance property reads as $D_{c\o\ph}(h\o\ph) = (D_c h) \o \ph$. Thus
\begin{align*}
F(c\o\ph) &=
\left(\sqrt{|c'|}\o\ph\right) \sqrt{\ph'} \, f \o \left(c \o \ph, (D_s c) \o \ph, \ldots\right)\\
&= \sqrt{\ph'}\, \left(\sqrt{|c'|}\, f \o (c, D_s c, \ldots)\right) \o \ph \,.
\end{align*}
The order of the induced metric will correspond to the order of the highest derivative appearing in the definition of the transform $F$.
\end{remark}
\begin{remark}
Often the transform $F$ will not be infinitesimally injective, but will have a kernel that consists of the constant vector fields. In this case we can regard $F$ as an infinitesimally injective map 
$F : \Imm(S^1,\R^2)/\on{Tra} \to C^\infty(S^1,\R^n)$
and it induces a Riemannian metric on the space $\Imm(S^1,\R^2)/\on{Tra}$ 
of parameterized curves modulo translations.
\end{remark}

We note that the same construction can also be applied for metrics on the space $\on{Imm}([0,2\pi], \R^2)$ of open curves.

\subsection{The square root velocity transform}\label{sec:Rtransform_original}

Our first example is the so-called square root velocity transform (SRVT), as introduced in \cite{Srivastava2011}:
\begin{align*}
&R:\;\Imm(S^1,\mathbb R^2)/\on{Tra}\to  C^\infty(S^1,\mathbb R^2) \\
&R(c) = |c'|^{1/2} v\enspace.
\end{align*}
The metric induced by this transform is
\[
G^{1,1/2}_c (h,h)= \int_{S^1} \langle D_sh,n\rangle^2+\tfrac 14 \langle D_sh,v\rangle^2\,ds\,,
\]
which is a special case of the family in \eqref{ab-metric} of elastic metrics. The transform $R$ can be written in the form \eqref{f_transform} with the function $f : \R^4 \to \R^2$, $f(x_1,x_2) = x_2$. To see this, note that $v = D_s c$. We will consider a generalization of this transform that allows us to represent metrics of the family in \eqref{ab-metric} for arbitrary parameters $a,b$ in Section \ref{sec:openR}.

\subsection{The transform of Younes et al. \cite{Michor111}}

The method applied in Younes et al. \cite{Michor111} to study a Sobolev-type metric on curves also fits within the setup described in this paper. The basic mapping considered in \cite{Michor111} is given by
\begin{align*}
&\Phi: C^\infty([0,2\pi],\R^2\setminus 0) \to \Imm([0,2\pi],\mathbb R^2)/\on{Tra} \\
&\Phi(q)(\theta) = \frac{1}{2}\int_0^\theta  \,\begin{pmatrix} q_1^2(u)-q_2^2(u)  \\2 q_1(u) q_2(u) \end{pmatrix}\,du\,.
\end{align*}
It has the property that it pulls back the Sobolev metric 
\[
G_c(h,h) = \int_0^{2\pi} |D_s h|^2 ds
\]
to the flat $L^2$-metric on the space $C^\infty([0,2\pi],\R^2\setminus 0)$. 
Here $\Imm([0,2\pi], \mathbb R^2)$ denotes the manifold of open curves. 

To fit this mapping into our framework we consider the inverse of $\Phi$:
\begin{align*}
&\Phi\i:\Imm([0,2\pi], \mathbb R^2)/\on{Tra} \to C^\infty([0, 2\pi], \R^2 \setminus 0) \\
&\Phi\i(c) = \sqrt{|c'|} \begin{pmatrix}
\cos \tfrac \al 2 \\
\sin \tfrac \al 2
\end{pmatrix}\, ,
\end{align*}
with $\al$ denoting the turning angle. If one wants to study closed curves instead of open curves, one needs to restrict the above mappings to the corresponding subspaces. 

\begin{thm}[From \cite{Michor111}]
The transform $\Phi\i$ induces the Sobolev metric of order one
\[
G_c(h,h)= \int_{0}^{2\pi} |D_s h|^2 \,ds
\]
on the space of parameterized curves modulo translations, i.e., the $a$-$b$ metric with $a=b=1$.
\end{thm}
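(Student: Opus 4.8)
The plan is to verify directly that $\Phi^{-1}$ has the general form \eqref{f_transform}, so that Theorem~\ref{thm:reparam} (together with the remark on transforms with constant kernel) applies and yields reparameterization invariance, and then to compute the pullback metric explicitly and identify it with the $a=b=1$ elastic metric. First I would write $\Phi^{-1}(c) = \sqrt{|c'|}\, f\circ v$, where $f:\R^2\to\R^2$ is defined on unit vectors by $f(\cos\al,\sin\al) = (\cos\tfrac\al2,\sin\tfrac\al2)$; since $v = D_s c$, this exhibits $\Phi^{-1}$ in the form \eqref{f_transform} with $m=2$. Strictly, $f$ as written is only defined (and smooth) on the circle $\{|x|=1\}$ and is two-valued as a function of $\al$, so a short remark is needed that we work locally in $\al$, or alternatively extend $f$ smoothly to a neighbourhood of the circle in $\R^2\setminus 0$; the equivariance computation in Remark~\ref{Rem:equivariance} goes through verbatim and gives \eqref{equivariance}. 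Hence $G^{\Phi^{-1}}$ is a reparameterization-invariant metric on $\Imm([0,2\pi],\R^2)/\on{Tra}$.

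Next I would compute $D_{c,h}\Phi^{-1}$ using Lemma~\ref{variational}. Writing $q = \Phi^{-1}(c) = \sqrt{|c'|}\,(\cos\tfrac\al2,\sin\tfrac\al2)$, the product rule gives
\[
D_{c,h} q = \tfrac12 \langle D_s h, v\rangle\, q + \sqrt{|c'|}\,\tfrac12 (D_{c,h}\al)\begin{pmatrix}-\sin\tfrac\al2\\[1mm]\cos\tfrac\al2\end{pmatrix},
\]
using \eqref{var_length} for $D_{c,h}|c'|$, and where $D_{c,h}\al = \langle D_s h, n\rangle$ follows from \eqref{var_v} and the definition $v = (\cos\al,\sin\al)$ (differentiating, $(D_{c,h}\al)\,n = D_{c,h}v = \langle D_s h, n\rangle n$). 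The two vectors $(\cos\tfrac\al2,\sin\tfrac\al2)$ and $(-\sin\tfrac\al2,\cos\tfrac\al2)$ are orthonormal, so
\[
|D_{c,h} q|^2 = \tfrac14 \langle D_s h, v\rangle^2 |c'| + \tfrac14 \langle D_s h, n\rangle^2 |c'|.
\]
Integrating $d\th$ and using $D_s^2 c = D_s v = \ka n$ (so that $|D_s h|^2 = \langle D_s h, v\rangle^2 + \langle D_s h, n\rangle^2$ by orthonormality of $v,n$) and $ds = |c'|\,d\th$ gives
\[
G^{\Phi^{-1}}_c(h,h) = \int_0^{2\pi} |D_{c,h} q|^2\, d\th = \tfrac14 \int_0^{2\pi} \big(\langle D_s h, v\rangle^2 + \langle D_s h, n\rangle^2\big)\, ds = \tfrac14 \int_0^{2\pi} |D_s h|^2\, ds.
\]

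The only remaining discrepancy is the factor $\tfrac14$, which reflects the factor $\tfrac12$ in the definition of $\Phi$; one either absorbs it by rescaling the transform (replacing $\Phi^{-1}$ by $\sqrt2\,\Phi^{-1}$, equivalently dropping the $\tfrac12$ in $\Phi$), or simply notes that the metric is $\tfrac14 G^{1,1}$, which up to the irrelevant global constant is the $a=b=1$ elastic metric; matching the normalization used in \cite{Michor111} I would adopt the rescaled transform so that the pullback is exactly $\int |D_s h|^2\, ds$. Infinitesimal injectivity modulo translations is clear: $D_{c,h}q = 0$ forces $\langle D_s h, v\rangle = \langle D_s h, n\rangle = 0$, hence $D_s h = 0$, hence $h$ is constant. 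The main obstacle is not any single computation but the bookkeeping at the start: making precise in what sense $\Phi^{-1}$ fits the template \eqref{f_transform} given that the turning angle $\al$ (and hence $f$) is only defined up to the choice of a branch and only on the unit circle; once that is dispatched, the variational formulas of Lemma~\ref{variational} do all the work.
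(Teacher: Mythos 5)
Your computation is sound and, in fact, the paper offers no proof of this statement at all -- it is quoted from \cite{Michor111} -- so a direct verification like yours is exactly what one would want to supply. The core steps are correct: $D_{c,h}\sqrt{|c'|}=\tfrac12\sqrt{|c'|}\langle D_sh,v\rangle$ by \eqref{var_length}, $D_{c,h}\al=\langle D_sh,n\rangle$ by \eqref{var_v}, orthonormality of the half-angle frame, and infinitesimal injectivity modulo translations; also your caveat about the branch of $\al$ is the right one to make (on $[0,2\pi]$ a continuous branch exists since $c'\neq 0$ and the interval is simply connected, and the residual sign ambiguity does not affect $|D_{c,h}\Phi\i|^2$, hence not the pullback metric). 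The appeal to $D_sv=\ka n$ is superfluous, since $|D_sh|^2=\langle D_sh,v\rangle^2+\langle D_sh,n\rangle^2$ needs only that $(v,n)$ is orthonormal.

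The one genuine slip is in the constant bookkeeping at the end. As you correctly compute, the transform $\Phi\i$ exactly as printed pulls back the flat metric to $\tfrac14\int_0^{2\pi}|D_sh|^2\,ds$. But your proposed repair does not restore the constant $1$: replacing $\Phi\i$ by $\sqrt2\,\Phi\i$ multiplies the pullback by $2$, giving $\tfrac12\int|D_sh|^2\,ds$, and this is also what you get if you instead use the genuine inverse of the printed $\Phi$, namely $c\mapsto\sqrt{2|c'|}\bigl(\cos\tfrac\al2,\sin\tfrac\al2\bigr)$ (note the printed $\Phi$ and $\Phi\i$ are not mutually inverse; they differ by exactly this $\sqrt2$). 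To obtain the pullback $\int_0^{2\pi}|D_sh|^2\,ds$ on the nose you must rescale by $2$, i.e.\ use $c\mapsto 2\sqrt{|c'|}\bigl(\cos\tfrac\al2,\sin\tfrac\al2\bigr)$, or equivalently rescale the $L^2$-metric by $4$. None of this affects the substance -- an overall positive constant changes neither reparameterization invariance (Theorem~\ref{thm:reparam} via the form \eqref{f_transform} and Remark~\ref{Rem:equivariance}), nor geodesics, nor the identification with the $a$-$b$ family up to scale -- but as written the claim ``the pullback is exactly $\int|D_sh|^2\,ds$'' is only true after fixing the normalization correctly, and your $\sqrt2$ does not do it.
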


\subsection{The $Q$-transform}
\label{sec:qtrans}

Another member in this family of shape transformations is the $Q$-transform, which was introduced in Mani et al. \cite{Mani2010} for curves and in Kurtek et al. \cite{Kurtek2011b, Kurtek2011a} for surfaces. 

Following \cite{Mani2010}, let us define for a curve $c\in\Imm(S^1,\R^2)$ the transform
\begin{align*}
&Q:\;\Imm(S^1,\mathbb R^2)\to  C^\infty(S^1,\mathbb R^2) \\
&Q(c)= \sqrt{|c'|}\, c\,.
\end{align*}

This transform is a special case of the family in \eqref{f_transform}, with the function $f:\R^2\to\R^2$, $f(x) = x$.

\begin{thm}\label{thm:qtrans}
The $Q$-transform induces a reparameterization invariant metric on $\Imm(S^1,\R^2)$, given by
\begin{align}
\label{eq:gqmetric}
 G_c(h,h)
= \int_{S^1} \big\langle h + \tfrac{1}{2} \langle D_s h, v \rangle c,
               h + \tfrac{1}{2} \langle D_s h,v \rangle c \big\rangle\, ds\,.
\end{align}
\end{thm}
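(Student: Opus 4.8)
The plan is to compute $D_{c,h}Q$ directly, using the product rule together with the variational formula \eqref{var_length} from Lemma~\ref{variational}, and then simply read off the induced metric from the definition $G_c^F(h,h) = \int_{S^1}|D_{c,h}F|^2\,d\th$. Since $Q(c) = \sqrt{|c'|}\,c$, the first variation is
\[
D_{c,h}Q = \big(D_{c,h}\sqrt{|c'|}\big)\,c + \sqrt{|c'|}\,h\,,
\]
and because $D_{c,h}|c'| = \langle D_sh,v\rangle\,|c'|$ we get $D_{c,h}\sqrt{|c'|} = \tfrac{1}{2}\langle D_sh,v\rangle\sqrt{|c'|}$. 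Hence
\[
D_{c,h}Q = \sqrt{|c'|}\Big(h + \tfrac{1}{2}\langle D_sh,v\rangle\,c\Big)\,.
\]

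Next I would substitute this into the pullback metric. We have
\[
G_c^Q(h,h) = \int_{S^1}\big|D_{c,h}Q\big|^2\,d\th
= \int_{S^1}|c'|\,\Big\langle h + \tfrac{1}{2}\langle D_sh,v\rangle c,\ h + \tfrac{1}{2}\langle D_sh,v\rangle c\Big\rangle\,d\th\,,
\]
and since $|c'|\,d\th = ds$ this is exactly formula \eqref{eq:gqmetric}. Finally, reparameterization invariance is immediate from Theorem~\ref{thm:reparam} once we check that $Q$ satisfies the equivariance property \eqref{equivariance}: indeed $Q$ has the form \eqref{f_transform} with $f(x) = x$ and $m=1$, so Remark~\ref{Rem:equivariance} applies. (One can also verify $Q(c\o\ph) = \sqrt{|(c\o\ph)'|}\,(c\o\ph) = \big(\sqrt{|c'|}\o\ph\big)\sqrt{\ph'}\,(c\o\ph) = \sqrt{\ph'}\,Q(c)\o\ph$ by hand.)

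There is essentially no hard step here; the only point requiring a little care is the claim implicit in the statement that $Q$ actually induces a genuine Riemannian metric, i.e.\ that $D_{c,\cdot}Q$ is injective. This is the substantive issue, and I expect it to be the main obstacle — in fact the formula shows $D_{c,h}Q = 0$ forces $h = -\tfrac12\langle D_sh,v\rangle c$ pointwise, which is a first-order linear ODE along the curve that can have nontrivial solutions (this is precisely the ``theoretical difficulties'' alluded to in Section~\ref{sec:qtrans}). So while the computation of the metric formula \eqref{eq:gqmetric} is routine, the honest statement is that $G_c^Q$ is a reparameterization invariant \emph{semi}-metric in general, and one should either restrict to a subspace on which infinitesimal injectivity holds or simply record \eqref{eq:gqmetric} together with the invariance, deferring the degeneracy discussion. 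I would present the proof as the two-line variation computation above followed by the one-line invariance check, and flag the injectivity subtlety in a remark rather than inside the proof.
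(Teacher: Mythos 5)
Your computation of $D_{c,h}Q$ and the resulting formula \eqref{eq:gqmetric}, as well as the equivariance check via Remark~\ref{Rem:equivariance}, coincide with the paper's argument. The genuine gap is in how you handle non-degeneracy: you declare the injectivity of $h\mapsto D_{c,h}Q$ to be an unresolved obstacle, conjecture that the pointwise relation $h=-\tfrac12\langle D_sh,v\rangle c$ ``can have nontrivial solutions'', and propose to weaken the statement to a semi-metric. This is precisely the part the paper regards as the substantive content of the theorem (it even remarks that no proof of this injectivity could be found in the literature), and the paper proves it, so your proposal does not establish the theorem as stated and its hedge points in the wrong direction.

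The paper's injectivity argument runs as follows. The condition $D_{c,h}Q=0$ is equivalent to $\langle h',c'\rangle c+2|c'|^2h=0$. At any $\theta$ with $c(\theta)=0$ this forces $h(\theta)=0$. On the open set $\{c\neq 0\}$ one writes $h=h_1c+h_2c^\perp$; the $c^\perp$-component gives $h_2\equiv0$, and the $c$-component gives the scalar ODE $\langle c,c'\rangle h_1'+3|c'|^2h_1=0$. At points where $\langle c,c'\rangle=0$ one gets $h_1=0$ directly (note that on a closed curve such points always exist, since $\langle c,c'\rangle=\tfrac12(|c|^2)'$ has zero mean, so your worry about a global nontrivial solution cannot even get started on all of $S^1$). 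On each open interval $(\theta_0,\theta_1)$ where neither $c$ nor $\langle c,c'\rangle$ vanishes, every solution is of the form $h_1(\theta)=C\exp\bigl(\int_{\tilde\theta}^{\theta}\tfrac{-3|c'|^2}{\langle c,c'\rangle}\,d\theta\bigr)$, and since $h_1$ must vanish at the endpoints $\theta_0,\theta_1$ (which lie outside the open set), only $C=0$ is possible. Hence $h\equiv0$ and $G^Q$ is a genuine Riemannian metric on all of $\Imm(S^1,\R^2)$. To repair your proposal you should replace the concluding remark by this ODE analysis rather than deferring the degeneracy question.
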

\begin{remark}
Note that the $Q$-transform induces a metric on  $\Imm(S^1,\R^2)$, the space of all parameterized curves. This is in contrast to the other transforms discussed in this section, which induce metrics on the space $\Imm(S^1,\R^2)/\on{Tra}$ of curves modulo translations.
\end{remark}
\begin{proof}
Using Lemma~\ref{variational} we can compute the variation of $Q$. It is given by
\begin{equation}
\label{eq:TQ}
D_{c,h} Q =  \left( h + \tfrac{1}{2} \langle D_s h, v \rangle c \right) \sqrt{|c'|} \,.
\end{equation}
This shows that the metric has the form \eqref{eq:gqmetric}.

It remains to show that the map $h \mapsto D_{c,h}Q$ is injective for all $c$. Since we are only considering immersions that satisfy $|c'|\neq 0$, the condition $D_{c,h} Q=0$ is equivalent to
\begin{equation}
\label{eq:TQ_zero}
\langle h', c' \rangle c + 2|c'|^2 h = 0\,.
\end{equation}
For all $\theta$, where $c(\theta)=0$, this already implies $h(\theta)=0$. On the open set $\{\theta: c(\theta)\ne 0\}$ we expand the tangent vector $h$ into a part along $c$ and a part orthogonal to it, $h = h_1 c + h_2 c^\perp$ with $c\perp c^\perp$. Then \eqref{eq:TQ_zero} can be rewritten as
\[
\langle h', c' \rangle c + 2|c'|^2 h_1 c + 2|c'|^2 h_2 c^\perp = 0\,.
\]
This implies that $h_2=0$ everywhere and we are left with
\begin{equation}
\label{eq:h1_zero}
\langle c, c' \rangle h_1' + 3|c'|^2 h_1 = 0\,. 
\end{equation}
As before, we solve for $\theta$ satisfying $\langle c(\theta), c'(\theta)\rangle=0$ and find that $h_1(\theta)=0$. It remains to study  the open set, where neither $c(\theta)$ nor $\langle c(\theta), c'(\theta)\rangle$ vanish. This open set is the union of disjoint open intervals. Denote one such interval by $(\theta_0, \theta_1)$. On this interval every solution of \eqref{eq:h1_zero} is given by
\[
h_1(\theta) = C e^{\int_{\wt\theta}^\theta\frac{-3|c'|^2}{\langle c, c'\rangle}d\theta }\,,
\]
with $\wt \theta \in (\theta_0, \theta_1)$. We are looking for a smooth solution $h_1$ on all of $S^1$ and we already know that for $\theta_0$ and $\theta_1$ the solution has to satisfy $h_1(\theta_0) = h_1(\theta_1) = 0$, because these points lie outside the open set. Thus we see that only the solution with $C=0$ can satisfy this. Therefore $h_1\equiv 0$ and with it $h\equiv 0$ on all of $S^1$.
\end{proof}

The injectivity of the map $D_{c,\cdot} Q$ is essential in order to use the metric for shape comparisons. However, we could not find a proof for this anywhere in the literature.

Questions that have a comparably easy answer for the family of $R$-transforms are much more difficult for the $Q$-transform. To our knowledge it is unknown wether $Q$ itself is injective; similarily little is known about its image  on either open or closed curves, e.g., whether the image is an open subset or a smooth submanifold of the space $C^\infty(S^1, \R^2)$. See Sections \ref{sec:openR} and  \ref{sec:closedR} for an answer to these questions in the case of the $R$-transform. Even finding a numerically efficient way to invert the $Q$-transform presents difficulties, mainly because the $Q$-transform scales the curve $c$ with the object $\sqrt{|c'|}$, which is geometrically the square-root of the volume element on the curve and multiplies tangent vectors along $c$, not the curve itself. 

\subsection{An $H^2$-type metric}\label{H2metric}
We can also use this method to construct higher order metrics, such as the following transform, which induces a second order Sobolev-type metric:
\begin{align*}
&K:\;\Imm(S^1,\mathbb R^2)/\on{Tra}\to  C^\infty(S^1,\mathbb R^3) \\
&K(c)=  \sqrt{|c'|} (v,\ka)\,.
\end{align*}
\begin{thm}
The transform $K$ induces a reparameterization invariant metric on $\Imm(S^1,\R^2)/\on{Tra}$. It is given by
\begin{align*}
G_c(h,h)= \int_{S^1}& \langle D^2_s h,n\rangle^2  -3\ka \langle D^2_s h,n\rangle \langle D_s h , v\rangle      \\&+  \langle D_sh, n \rangle^2  +
\frac14 (1 +9\ka^2)\langle D_sh, v \rangle^2
\,ds\,.
\end{align*}
\end{thm}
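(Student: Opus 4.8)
The plan is to proceed exactly as in the proof of Theorem~\ref{thm:qtrans}: first compute the differential $D_{c,h}K$ using the variational formulas of Lemma~\ref{variational}, then read off the pulled-back metric, and finally verify infinitesimal injectivity so that Theorem~\ref{thm:reparam} (or its translation-quotient variant) applies. For the first step I would differentiate the three components of $K(c) = \sqrt{|c'|}\,(v,\ka)$ separately. Writing $\sqrt{|c'|}$ for the scalar factor, the Leibniz rule combined with \eqref{var_length} gives $D_{c,h}\sqrt{|c'|} = \tfrac12\langle D_s h,v\rangle\sqrt{|c'|}$, and then \eqref{var_v} and \eqref{var_ka} yield
\begin{align*}
D_{c,h}\!\left(\sqrt{|c'|}\,v\right) &= \sqrt{|c'|}\left(\langle D_s h,n\rangle\, n + \tfrac12\langle D_s h,v\rangle\, v\right),\\
D_{c,h}\!\left(\sqrt{|c'|}\,\ka\right) &= \sqrt{|c'|}\left(\langle D_s^2 h,n\rangle - 2\ka\langle D_s h,v\rangle + \tfrac12\ka\langle D_s h,v\rangle\right)\\
&= \sqrt{|c'|}\left(\langle D_s^2 h,n\rangle - \tfrac32\ka\langle D_s h,v\rangle\right).
\end{align*}

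The second step is then purely algebraic: one forms $\int_{S^1}|D_{c,h}K|^2\,d\theta$, noting that $d\theta$ and the factor $|c'|$ from $|\sqrt{|c'|}\,\cdot|^2$ combine to arc-length $ds$, and that $\{v,n\}$ is orthonormal so the $v$- and $n$-parts of the first two components do not interact. Collecting the coefficient of $\langle D_s^2 h,n\rangle^2$ gives $1$; the cross term $\langle D_s^2 h,n\rangle\langle D_s h,v\rangle$ comes only from squaring the curvature component and produces $2\cdot(-\tfrac32\ka) = -3\ka$; the coefficient of $\langle D_s h,n\rangle^2$ is $1$ from the $n$-part of $D_{c,h}(\sqrt{|c'|}v)$; and the coefficient of $\langle D_s h,v\rangle^2$ is $\tfrac14$ from the $v$-part of that same term plus $\tfrac94\ka^2$ from the curvature component, i.e.\ $\tfrac14(1+9\ka^2)$. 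This reproduces the claimed formula.

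For the third step, infinitesimal injectivity modulo translations, suppose $D_{c,h}K = 0$. Since $\sqrt{|c'|}\neq 0$ and $\{v,n\}$ is a frame, all three scalar expressions must vanish pointwise: $\langle D_s h,n\rangle = 0$, $\langle D_s h,v\rangle = 0$, and $\langle D_s^2 h,n\rangle - \tfrac32\ka\langle D_s h,v\rangle = 0$. The first two already force $D_s h = 0$, hence $h$ is constant, which is exactly the translation kernel; the third equation is then automatically satisfied and imposes nothing further. So the only obstacle to injectivity is the expected constant-vector-field kernel, which is why the statement is phrased on $\Imm(S^1,\R^2)/\on{Tra}$. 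Reparameterization invariance is immediate from Theorem~\ref{thm:reparam} together with Remark~\ref{Rem:equivariance}, since $K$ has the form \eqref{f_transform} with $m=3$ and $f(x_1,x_2,x_3)$ built from $x_2$ (which equals $D_s c = v$) and the curvature, the latter being a smooth function of $x_2,x_3$ on the immersion domain.

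I do not expect any genuine obstacle here: unlike the $Q$-transform, whose injectivity argument required the delicate ODE analysis over the sign-changing locus of $\langle c,c'\rangle$, the transform $K$ has the arc-length derivative $D_s h$ appearing directly, so the injectivity reduces instantly to "$D_s h = 0 \Rightarrow h$ constant". The only point demanding minor care is bookkeeping in the second step — making sure the $\tfrac12\langle D_s h,v\rangle\ka$ contribution from differentiating the scalar factor $\sqrt{|c'|}$ is correctly combined with the $-2\ka\langle D_s h,v\rangle$ from \eqref{var_ka} to give the net $-\tfrac32\ka\langle D_s h,v\rangle$, since an arithmetic slip there would change both the $-3\ka$ cross term and the $9\ka^2$ coefficient.
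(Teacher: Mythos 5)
Your proposal is correct and follows essentially the same route as the paper: the paper likewise computes $D_{c,h}K$ from Lemma~\ref{variational}, obtaining exactly your expression $\sqrt{|c'|}\bigl(\tfrac12\langle D_sh,v\rangle v+\langle D_sh,n\rangle n,\ \langle D_s^2h,n\rangle-\tfrac32\ka\langle D_sh,v\rangle\bigr)$, and reads off the pullback metric. The only cosmetic difference is in the injectivity step, where the paper notes that the immersion can be reconstructed up to translations from the first two components of $K$ (the SRVT), while you verify infinitesimal injectivity directly from $D_sh=0$; both arguments are valid and equivalent in substance.
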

\begin{proof}
To calculate the formula of the pullback metric we compute the variation of $K$. Using Lemma~\ref{variational} we obtain:
\begin{equation*}
\label{eq:TK}
D_{c,h} K = \sqrt{|c'|}
\begin{pmatrix}
\frac12 \langle D_sh, v \rangle v +  \langle D_sh, n \rangle n  \\
\langle D^2_s h,n\rangle -\tfrac32 \ka \langle D_s h , v\rangle  
\end{pmatrix}\,,
\end{equation*}
from which we can deduce the formula for the pullback metric. 
The injectivity of the map $K$ and its derivative is clear, since one can reconstruct the immersion up to translations from the first two components of $K$, c.f., Section \ref{sec:imageRmap}.
\end{proof}

Note that the transform $K$ is not surjective. In fact, since it maps curves to $\R^3$-valued functions, the image will have infinite codimension. This is in contrast to the transform considered in Section \ref{sec:openR}, 
whose image is open on the space of open curves and has finite codimension for closed curves. This is the price that has to be paid for increasing the order of the metric. Second order metrics, that are induced in a similar way, 
are studied in \cite{Bauer2013a_preprint}.

\subsection{The elastic metric in Mio et al. \cite{Mio2007}}

The representation of the elastic metric used in \cite{Mio2007} can also be rephrased in the spirit of representing metrics as pull-backs of simpler metrics.  Define on $\R^2$ the Riemannian metric
\[
g_{(x,y)} = a^2 e^x dx^2 + b^2 e^x dy^2
\]
and consider  parallel to \eqref{eq:l2_flat} the $L^2$-metric on the space $C^\infty(S^1,(\R^2,g))$,
\begin{equation}
\label{eq:l2_nonflat}
G_q^{L^2(g)}(h,h) = \int_{S^1}g_{q(\th)}(h(\th), h(\th))\,d\th\,.
\end{equation}
Then the transform
\begin{align*}
&R:\;\Imm(S^1,\mathbb R^2)/\on{Tra}\to  C^\infty(S^1,\mathbb (\R^2,g)) \\
&R(c)=  (\log |c'|, \al)
\end{align*}
pulls back the $L^2$-metric to the elastic metric \eqref{ab-metric}. While in this case the metric \eqref{eq:l2_nonflat} is not as simple as the $L^2$-metric in \eqref{eq:l2_flat}, 
it is still simpler than the elastic metric, since it does not contain spatial derivatives.

\section{The $R$-transform for open curves\label{sec:openR}}

We want to generalize the SRVT, as defined in Section \ref{sec:Rtransform_original}, to
study the family of $a$-$b$ metrics \eqref{ab-metric} for arbitrary parameters $a$ and $b$. 
We will start by studying the metric on the set of open curves $\Imm([0,2\pi], \R^2)/\on{Tra}$, and then investigate the closedness conditions.

Define the $R$-transform of a plane curve $c \in \Imm([0,2\pi], \R^2)$ by
\begin{align*}
&R^{a,b}:\;\Imm([0,2\pi],\mathbb R^2)/\on{Tra}\to  C^\infty([0,2\pi],\mathbb R^3) \\
&R^{a,b}(c) = |c'|^{1/2}\left(a\begin{pmatrix} v\\ 0\end{pmatrix}
+ \sqrt{4b^2-a^2}\begin{pmatrix} 0\\ 1 \end{pmatrix}\right)\enspace.
\end{align*}
Here $a,b\in\R^+$ are positive numbers with $4b^2\geq a^2$. We will omit the parameters $a,b$ when the meaning is unambiguous. The $R$-transform maps an open plane curve to a space curve. We will see that equipping the space $C^\infty([0,2\pi],\mathbb R^3)$ with a flat $L^2$-metric, i.e., considering the vector space $C^\infty([0,2\pi],\mathbb R^3)$ with the $L^2$-inner product as a Riemannian manifold, will generate Sobolev metrics of order one via the pullback by the $R$-transform. For the choice of parameters $4b^2=a^2$ the $R$-transform reduces to the SRVT of Section \ref{sec:Rtransform_original}, as studied in \cite{Srivastava2011}. In the following theorem we will show that the metrics induced by the $R^{a,b}$-transform coincide 
with the family of $a$-$b$ metrics in \eqref{ab-metric} as introduced in \cite{Mio2007}.

First let us note the following properties of the $R$-transform.
\begin{lem}\label{lem:equivariance}
For a curve $c\in \Imm([0,2\pi],\mathbb R^2)$ and  $\ph\in\Diff([0,2\pi])$ a diffeomorphism we have:
\begin{itemize}
\item
Equivariance under reparametrizations
\[ R(c\circ\ph) = |\ph'|^{1/2} . (R(c)\circ\ph) \]
\item
Translation invariance
\[ R(c + p) = R(c) \text{ for } p\in \mathbb R^2 \]
\item
Scaling property
\[ R(\rh.c ) = \rh^{1/2}. R(c) \text{ for } \rh\in \mathbb R_{>0} \]
\item
Preservation of the $L^2$-norm under reparametrizations
\[ \int_{0}^{2\pi} |R(c\circ \ph)|^2\,d\theta = \int_{S^1} |R(c)|^2\,d\theta\]
\end{itemize}
\end{lem}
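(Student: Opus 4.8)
The plan is to verify the four listed properties directly from the definition of the $R$-transform, using the chain rule and the basic equivariance of the arc-length quantities $v$, $|c'|$ and the turning angle. All four are elementary, so the proof is essentially a sequence of short computations; the main point is simply to organize them cleanly.

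For the equivariance under reparametrizations, I would start from $(c\circ\ph)' = (c'\circ\ph)\ph'$, so that $|(c\circ\ph)'| = (|c'|\circ\ph)|\ph'|$ (using $\ph'>0$), and consequently the unit tangent satisfies $v_{c\circ\ph} = v_c\circ\ph$. Hence $|(c\circ\ph)'|^{1/2} = (|c'|^{1/2}\circ\ph)|\ph'|^{1/2}$, and since the constant-vector part $\sqrt{4b^2-a^2}\begin{pmatrix}0\\1\end{pmatrix}$ is unaffected by the argument, one reads off $R(c\circ\ph) = |\ph'|^{1/2}.(R(c)\circ\ph)$. The translation invariance is immediate: $R(c)$ depends on $c$ only through $c'$, and $(c+p)' = c'$ for constant $p\in\R^2$, so $R(c+p)=R(c)$. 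The scaling property follows the same way: $(\rh.c)' = \rh.c'$, so $|(\rh.c)'| = \rh|c'|$ (as $\rh>0$) and $v_{\rh.c} = v_c$, giving $|(\rh.c)'|^{1/2} = \rh^{1/2}|c'|^{1/2}$ and therefore $R(\rh.c) = \rh^{1/2}.R(c)$.

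For the last item, the preservation of the $L^2$-norm, I would simply substitute the first property into the integral and change variables. Since $|R(c\circ\ph)(\th)|^2 = |\ph'(\th)|\,|R(c)(\ph(\th))|^2$, the integral $\int_0^{2\pi}|R(c\circ\ph)|^2\,d\th$ becomes $\int_0^{2\pi}|R(c)(\ph(\th))|^2\ph'(\th)\,d\th$, which equals $\int_0^{2\pi}|R(c)|^2\,d\th$ by the substitution $u=\ph(\th)$ and the fact that $\ph$ is an orientation-preserving diffeomorphism of the interval (or of $S^1$). This is exactly the same calculation that appears in the proof of Theorem \ref{thm:reparam}, specialized to $F=R$.

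There is no real obstacle here; if anything, the only thing to be slightly careful about is keeping the notation for the dot-action $\rh.c$ and $|\ph'|^{1/2}.(\cdot)$ consistent with its use elsewhere, and noting that $\sqrt{4b^2-a^2}$ is a fixed nonnegative real so that the second summand in $R(c)$ genuinely contributes nothing to the transformation behaviour under $\ph$, $p$, or $\rh$. I would therefore present the proof as: compute $(c\circ\ph)'$, $(c+p)'$, $(\rh.c)'$; deduce the behaviour of $|c'|^{1/2}$ and $v$; read off the first three properties; and obtain the fourth by the change-of-variables argument above.
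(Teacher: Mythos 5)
Your proof is correct and follows essentially the same route as the paper: the paper deduces the equivariance property from Remark~\ref{Rem:equivariance} (which is precisely your chain-rule computation for $\sqrt{|c'|}$ and $v$) and dismisses the remaining three properties as simple calculations of exactly the kind you write out. Nothing is missing; your explicit change-of-variables argument for the $L^2$-norm matches the computation already used in Theorem~\ref{thm:reparam}.
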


\begin{proof}
The first property follows from Remark \ref{Rem:equivariance}. The other properties can be verified by simple calculations.
\end{proof}

\begin{thm}[The pullback metric on $\Imm/\on{Tra}$]\label{thm:grmetric}
The pullback of the $L^2$-inner product on $C^{\infty}([0,2\pi],\mathbb R^3)$ 
to the manifold of immersions, $\Imm([0,2\pi],\R^2)$, by the $R^{a,b}$-transform yields the family of reparameterization invariant $a$-$b$ metrics on the space $\Imm([0,2\pi],\R^2)/{\on{Tra}}$:
\[
G_c^{a,b}(h,h)= \int_{0}^{2\pi} a^2\langle D_sh,n\rangle^2+b^2 \langle D_sh,v\rangle^2\,ds\,.
\]
\end{thm}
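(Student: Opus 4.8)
The plan is to compute the differential of the $R^{a,b}$-transform directly, using the variational formulas of Lemma~\ref{variational}, and then read off the pullback metric. First I would write $R^{a,b}(c) = |c'|^{1/2}\bigl(a(v,0)^{T} + \sqrt{4b^2-a^2}(0,1)^{T}\bigr)$, note that the second summand is a \emph{constant} vector in $\R^3$ (it does not depend on $c$), and differentiate in the direction $h$ using the product rule. The two ingredients are $D_{c,h}|c'|^{1/2} = \tfrac12 |c'|^{-1/2} D_{c,h}|c'| = \tfrac12 |c'|^{1/2}\langle D_s h, v\rangle$ from \eqref{var_length}, and $D_{c,h} v = \langle D_s h, n\rangle n$ from \eqref{var_v}. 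Combining these, I expect
\[
D_{c,h} R^{a,b} = |c'|^{1/2}\left( a\begin{pmatrix} \langle D_s h, n\rangle n + \tfrac12 \langle D_s h, v\rangle v \\ 0 \end{pmatrix} + \tfrac12\sqrt{4b^2-a^2}\,\langle D_s h, v\rangle \begin{pmatrix} 0 \\ 1\end{pmatrix}\right).
\]

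Next I would take the squared Euclidean norm of this $\R^3$-valued expression pointwise. Since $v$ and $n$ are orthonormal in $\R^2$ and the third coordinate is orthogonal to the first two, the cross terms between the $\langle D_s h, n\rangle$ part and the $\langle D_s h, v\rangle$ parts vanish, and I get
\[
|D_{c,h} R^{a,b}|^2 = |c'|\left( a^2\langle D_s h, n\rangle^2 + \Bigl(\tfrac{a^2}{4} + \tfrac{4b^2-a^2}{4}\Bigr)\langle D_s h, v\rangle^2\right) = |c'|\left( a^2\langle D_s h, n\rangle^2 + b^2 \langle D_s h, v\rangle^2\right).
\]
Integrating $d\th$ and using $|c'|\,d\th = ds$ then yields exactly $G_c^{a,b}(h,h)$ as claimed. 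The reparameterization invariance is not something I would need to re-prove here, since it follows from Theorem~\ref{thm:reparam} together with the equivariance property $R(c\circ\ph) = |\ph'|^{1/2}.(R(c)\circ\ph)$ established in Lemma~\ref{lem:equivariance} (alternatively one sees directly that $R^{a,b}$ has the form \eqref{f_transform}, since $v = D_s c$, with $f(x_1,x_2) = a(x_2,0)^{T} + \sqrt{4b^2-a^2}(0,1)^{T}/\sqrt{|c'|}$—though this last point needs a little care because of the constant term, so appealing to Lemma~\ref{lem:equivariance} is cleaner).

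There is essentially no hard obstacle here: the proof is a direct computation, and the only thing to be careful about is the algebraic bookkeeping of the coefficient of $\langle D_s h, v\rangle^2$, namely that $\tfrac{a^2}{4} + \tfrac{4b^2 - a^2}{4} = b^2$, which is where the specific normalization $\sqrt{4b^2-a^2}$ in the definition of $R^{a,b}$ pays off and is precisely why the constraint $4b^2 \geq a^2$ is imposed. One should also remark that, as in the preceding examples, the transform $R^{a,b}$ is not infinitesimally injective on $\Imm([0,2\pi],\R^2)$ itself—its kernel consists of the constant vector fields—which is why the statement is phrased on the quotient $\Imm([0,2\pi],\R^2)/\on{Tra}$; infinitesimal injectivity on the quotient is immediate since $D_{c,h}R^{a,b} = 0$ forces $\langle D_s h, n\rangle = \langle D_s h, v\rangle = 0$, hence $D_s h = 0$, hence $h$ is constant.
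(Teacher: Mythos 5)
Your proposal is correct and follows essentially the same route as the paper: compute $D_{c,h}R^{a,b}$ from the variational formulas of Lemma~\ref{variational}, observe that the squared norm collapses to $a^2\langle D_sh,n\rangle^2 + b^2\langle D_sh,v\rangle^2$ times $|c'|$, and obtain invariance from the equivariance property together with the remark that the kernel of $D_{c,\cdot}R^{a,b}$ consists of the constant vector fields. Your added care about the quotient by translations and the coefficient bookkeeping matches the paper's argument; no gaps.
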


\begin{proof}
The pullback metric is defined via
\[ G^{a,b}_c(h,h) = \left\langle D_{c,h}R, D_{c,h}R \right\rangle_{L^2},\]
and hence we need to compute the variation of the $R$-transform. Using the formulas for the derivatives of the functions $c\mapsto |c'|^{1/2}$ and $c \mapsto v$
from Lemma \ref{variational} we obtain
\begin{align*}
 D_{c,h} R 
&= |c'|^{1/2} \begin{pmatrix} a \langle D_sh, n \rangle n + \frac a2 \langle D_sh, v \rangle v \\ \frac 12 \sqrt{4b^2-a^2} \langle D_sh, v \rangle \end{pmatrix}.
\end{align*}
Therefore the pullback metric is
\begin{align*}
G^{a,b}_c(h,h) &= \int_0^{2\pi} \bigg( \left| a \langle D_sh, n \rangle n + \frac a2 \langle D_sh, v \rangle v \right|^2 
+ \left(b^2-\frac{a^2}4\right) \langle D_sh, v \rangle^2 \bigg) |c'| \,d\theta \\
&= \int_0^{2\pi}  a^2\langle D_sh,n\rangle^2+b^2 \langle D_sh,v\rangle^2\,ds\,.
\end{align*} 
Lemma \ref{lem:equivariance} tells us that the metric $G^{a,b}$ is invariant under reparameterizations.
This can also be seen directly, since the metric is written only in terms of operations $D_s, v, n$, which are equivariant with  respect to reparameterizations. 
The kernel of the transform consists only of constant vector fields, since one can reconstruct $D_sh$ from $D_{c,h}R^{a,b}$. Thus $R^{a,b}$ induces a metric on $\Imm(S^1,\R^2)/\on{Tra}$.

\end{proof}

\subsection{Geodesic Equation}
This family of metrics can be also written in the form
\[ G_c^{a,b}(h,h) = \int_{0}^{2\pi}\langle P^{a,b}_c(h),h \rangle\,ds \]
where for each $c$ the associated pseudo-differential operator $P^{a,b}_c$ is given by
\begin{equation}
\label{eq:ass_diff_open}
\begin{aligned}
 P^{a,b}_c(h)=&
-D_s \left( a^2 \langle D_s^2h,n\rangle n + b^2 \langle D_s^2h,v\rangle v\right) 
\\&\qquad
+(\de_{2\pi}-\de_0)\big(a^2 \langle n,D_sh\rangle n +b^2 \langle v,D_sh\rangle v\big)\,.
\end{aligned}
\end{equation}
Here $\delta$ denotes the delta distribution.

If we identify the space of immersions modulo translations with the section
\[
\on{Imm}([0,2\pi],\R^2)/\on{Tra} \cong
\{ c \,:\, c(0) = 0 \}\,,
\]
then the geodesic equation for the $G^{a,b}$-metric is
\begin{equation}
\label{eq:geod_eq_open}
\begin{aligned}
D_s \left( \left(A_c c_t \right)_t
+ \frac 12 B_c(c_t,c_t) \right) &= 0 \\
\left( A_c c_t \right)_t(2\pi) + \frac 12 B_c(c_t,c_t)(2\pi) &= 0\,,
\end{aligned}
\end{equation}
with
\begin{align*}
A_c h &= a^2 \langle D_s h, n \rangle n + b^2 \langle D_s h, v \rangle v\\
B_c(h,h) &= \left( a^2 \langle D_s h, n \rangle^2 + b^2 \langle D_s h, v \rangle^2 \right) v - 2(b^2 - a^2)\langle D_s h, n \rangle \langle D_s h, v \rangle n\,.
\end{align*}
We can then integrate the equation once and use the boundary conditions to determine the constant of integration. The equation we obtain is
\[
\left(A_c c_t \right)_t + \frac 12 B_c(c_t,c_t) = 0\,.
\]

On the space of closed curves, the associated operator is the same as in \eqref{eq:ass_diff_open}, except that the terms involving the delta-distribution disappear. The geodesic equation is the same as \eqref{eq:geod_eq_open} minus the boundary conditions. 

From the results of Sect. \ref{sec:imageRmap} we can deduce an explict solution formula for the geodesic equation on the space of open curves. Given a curve $c_0$ and an initial velocity $u_0$, the solution of the geodesic equation for small time is given by
\begin{equation*}
c(t,\th) = \left(R^{a,b}\right)\i\big(R^{a,b}(c_0)+t D_{c_0,u_0} R^{a,b}\big)\,.
\end{equation*}
Even though the geodesic equation on the space of closed curves is simpler, there exists no explicit solution formula for it; see Section \ref{sec:numerics} for numerical computations of geodesics on closed curves.

\subsection{Image of the $R$-transform}\label{sec:imageRmap}

To characterize the image of the $R$-transform we note that we can reconstruct $c'$ from the first two components of $R(c)$ via
\[ c'= \frac{1}{a^2} \sqrt{R_1^2(c)+R_2^2(c)}\,\begin{pmatrix} R_1(c)  \\ R_2(c) \end{pmatrix} \]
and hence $ a |c'|^{1/2} = \sqrt{R_1^2(c)+R_2^2(c)} $. This implies that
\[ R_3(c) = \frac{\sqrt{4b^2-a^2}}{a} \sqrt{R_1^2(c)+R_2^2(c)}\,, \]
which can be written in the form
\begin{align} \label{formula_cone}
 \left(4b^2-a^2\right)\left(R_1(c)^2 + R_2(c)^2\right) = a^2 R_3(c)^2\,. 
\end{align}

Let us define the cone
\[ C^{a,b} = \left\{ q \in \R^3\,:\, (4b^2-a^2)(q_1^2 + q_2^2) = a^2q_3^2,\, q_3 > 0 \right\} \]
in $\R^3$. Then the image of the $R$-transform consists of curves, that lie in $C^{a,b}$. That is, for a curve $q \in  C^{\infty}([0,2\pi],\mathbb R^3)$ we have
\begin{align*}
 q \in \on{im} R &\Longleftrightarrow q(\theta) \in C^{a,b},\, \forall \theta \in [0,2\pi]\,, \\
\on{im} R &= C^\infty([0,2\pi], C^{a,b})\,.
\end{align*}
In the special case $4b^2=a^2$ the $R^{2a,a}$-transform has no third component and the image of the $R^{2a,a}$-transform is the open set consisting of all curves which avoid $(0,0) \in \R^2$. The latter condition arises from the requirement that the curves be regular. 

The inverse of the $R$-transform can be computed using the identity
\[ c' = \frac 1{2ab} |R| \begin{pmatrix} R_1(c) \\ R_2(c) \end{pmatrix}. \]
Therefore
\begin{align*}
&R\i : \on{im} R \to \Imm([0,2\pi], \R^2)/\on{Tra} \\
&R\i(q)(\theta) = p_0 + \frac{1}{2ab} \int_0^\theta |q(\theta)| \begin{pmatrix} q_1(\theta) \\ q_2(\theta) \end{pmatrix} d\theta\,.
\end{align*}
Of course, the inverse of the $R$-transform is defined only up to translation, which manifests itself as the freedom to choose the starting point $p_0\in \R^2$ of the integration.

The cone $C^{a,b}$ is a flat hypersurface in $\R^3$, since there is an isometric covering map from the polar coordinate domain $\{ (r,\ph)\,:\, r>0\}$ to $C^{a,b}$ given by
\[ q(r,\ph) = \left( \tfrac rm \cos(m\ph), \tfrac rm \sin(m\ph), \tfrac {\sqrt{4b^2-a^2}}{2b} r \right)\,, \]
with $m = \frac {2b}a$. The computation 
\[ (4b^2-a^2)(q_1^2 + q_2^2) = \frac {4b^2-a^2}{m^2} r^2 = a^2 q_3^2\,, \]
checks that the image of this map is the cone $C^{a,b}$. From
\begin{align*}
dq_1^2 + dq_2^2 + dq_3^2 &= \frac{1}{m^2} dr^2 + r^2 d\ph^2 + \frac{4b^2-a^2}{4b^2} dr^2 = dr^2 + r^2 d\ph^2
\end{align*}
we see that the map $q(r,\ph)$ is an isometry from the Euclidean metric in $\R^2$, which has the expression $dr^2 + r^2 d\ph^2$ in polar coordinates, to the natural metric on the cone $C^{a,b}$.

The inverse map is determined only up to a multiple of $2\pi$ and is given by
\[
\Phi(q)=\left(\begin{aligned}
r(q) &= \frac{2b}{\sqrt{4b^2-a^2}}q_3
\\
\ph(q) &= \frac{a}{2b}\left(\arctan\left(\frac{q_2}{q_1}\right) + 2k\pi\right)
\end{aligned}\right)\,
\]
with $k \in \mathbb Z$. Using the inverse map we can write the distance function on the cone
\begin{align}
\label{eq:geod_length}
\on{dist}^2(q,\ol q) &=
(x_1(q)-x_1(\ol q))^2 + (x_2(q)-x_2(\ol q))^2 \\
\nonumber
&=r(q)^2 + r(\ol q)^2 -2r(q)r(\ol q)\cos\left(\ph(q)-\ph(\ol q)\right) \\
\nonumber
&= \min_{k \in \mathbb Z} \frac{4b^2}{4b^2-a^2} \bigg(
q_3^2 + \ol q_3^2 -{} \\
\nonumber
&\qquad{}
-2q_3 \ol q_3 
\cos\left( \frac a{2b} \left( \arctan \frac{q_2}{q_1} -
\arctan\frac{\ol q_2}{\ol q_1} \right) + \frac ab k \pi \right) \bigg)\,.
\end{align}
The minimum appears because the angle is only determined up to a multiple of $2\pi$.

\begin{thm}
\label{thm:open_flat}
The metric $G^{a,b}$ on open curves is flat. Geodesics are the preimages under the $R$-transform of geo\-de\-sics on the flat space $\on{im} R$.

A path of curves $q: \R \times [0,2\pi] \to C^{a,b}$ in $\on{im} R$ is a geodesic if for each $\theta \in [0,2\pi]$ the curve $t \mapsto q(t,\theta)$ is a geodesic in $C^{a,b}$.

The geodesic distance between $c, \ol c \in \Imm([0,2\pi],\R^2)/\on{Tra}$ is given by the integral over the pointwise distances,
\[ \int_0^{2\pi} \on{dist}( R(c)(\theta), R(\ol c)(\theta)) d\theta\,.\]
However, the minimum over $k \in \mathbb Z$ is not to be taken pointwise, but only once for all values of $\theta$. This corresponds to choosing a continuous lift of the curve $R(c)$ via $\Phi$.
\end{thm}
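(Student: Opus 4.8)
\medskip
\noindent\textbf{Proof plan.} The plan is to deduce everything from the single fact that $R^{a,b}$ is an isometry onto $\on{im}R^{a,b}$ with the $L^2$-metric, and then to exploit that this target is an $L^2$-space of maps into the \emph{flat} cone $C^{a,b}$. By Theorem~\ref{thm:grmetric} the pullback of the $L^2$-metric on $C^\infty([0,2\pi],\R^3)$ under $R^{a,b}$ is $G^{a,b}$, and Section~\ref{sec:imageRmap} identifies $R^{a,b}$ as a bijection onto $\on{im}R^{a,b}=C^\infty([0,2\pi],C^{a,b})$; since forming the $L^2$-metric commutes with restricting the ambient Euclidean metric of $\R^3$ to the submanifold $C^{a,b}$, the map
\[
R^{a,b}\colon\bigl(\Imm([0,2\pi],\R^2)/\on{Tra},\,G^{a,b}\bigr)\longrightarrow\bigl(C^\infty([0,2\pi],C^{a,b}),\,G^{L^2}\bigr)
\]
is an isometry of weak Riemannian manifolds. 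It therefore suffices to prove the three assertions downstairs and transport them back through $R^{a,b}$ and its inverse. (For notational ease assume $4b^2>a^2$; when $4b^2=a^2$ the target is $C^\infty([0,2\pi],\R^2\setminus\{0\})$, cf.\ Section~\ref{sec:imageRmap}, and the argument is the same, in fact simpler.)

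For flatness I would use that the $L^2$-metric inherits its curvature pointwise: on $C^\infty([0,2\pi],N)$ the Levi--Civita connection is $(\nabla_h k)(\theta)=\nabla^N_{h(\theta)}k(\theta)$ and the curvature satisfies $\bigl(R(h,k)l\bigr)(\theta)=R^N_{q(\theta)}\bigl(h(\theta),k(\theta)\bigr)l(\theta)$. Since $C^{a,b}$ with the induced metric is flat --- the map $q(r,\ph)$ of Section~\ref{sec:imageRmap} is a local isometry from $\{(r,\ph):r>0\}$ with the flat metric $dr^2+r^2\,d\ph^2$ onto $C^{a,b}$, so $R^{C^{a,b}}\equiv 0$ --- the curvature of $G^{L^2}$, and hence of $G^{a,b}$, vanishes identically. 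The same picture yields flat charts: any smooth $q\colon[0,2\pi]\to C^{a,b}$ lifts, uniquely up to the deck shift $\ph\mapsto\ph+2\pi$, to the flat universal cover $\{(r,\ph):r>0,\ \ph\in\R\}$, and composing with the developing map $(r,\ph)\mapsto(r\cos\ph,r\sin\ph)\in\R^2\setminus\{0\}$ gives, near $q$, a chart of $C^\infty([0,2\pi],C^{a,b})$ in which $G^{L^2}$ becomes the constant $L^2$-inner product.

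For the geodesics I would use that the $L^2$-energy of a path $q(t,\theta)$ decouples over $\theta$,
\[
E(q)=\tfrac12\int_0^1\!\!\int_0^{2\pi}g_{q}(q_t,q_t)\,d\theta\,dt=\int_0^{2\pi}\Bigl(\tfrac12\int_0^1 g_{q}(q_t,q_t)\,dt\Bigr)\,d\theta,
\]
and that, with the endpoints fixed, the inner curves $t\mapsto q(t,\theta)$ may be varied independently in $\theta$. Hence $q$ is critical for $E$ --- equivalently $\nabla_{\partial_t}q_t\equiv 0$ --- if and only if $t\mapsto q(t,\theta)$ is a geodesic of $C^{a,b}$ for every $\theta$. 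Transporting this through $R^{a,b}$ proves the first two assertions, and, the geodesics of the flat cover being straight lines, also reproves the explicit solution formula $c(t,\theta)=(R^{a,b})\i\bigl(R^{a,b}(c_0)+t\,D_{c_0,u_0}R^{a,b}\bigr)$, valid as long as the segment stays in $\on{im}R^{a,b}$ (it may fail to, by hitting the apex $r=0$; this is where incompleteness enters).

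For the distance formula the factorisation of $E$ together with the Cauchy--Schwarz inequality gives, exactly as for the $L^2$-metric on $C^\infty(M,N)$ in general, the lower bound
\[
\on{dist}^{\Imm/\on{Tra}}(c,\bar c)^2\ \ge\ \int_0^{2\pi}\on{dist}_{C^{a,b}}\!\bigl(R^{a,b}(c)(\theta),\,R^{a,b}(\bar c)(\theta)\bigr)^2\,d\theta,
\]
with $\on{dist}_{C^{a,b}}$ the cone distance \eqref{eq:geod_length}. Matching the upper bound requires a genuinely admissible competitor, i.e.\ a smooth-in-$\theta$ family of minimising cone geodesics joining $R^{a,b}(c)(\theta)$ to $R^{a,b}(\bar c)(\theta)$, and this is the step I expect to be the main obstacle. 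The pointwise minimiser realises the $\min_{k\in\mathbb Z}$ in \eqref{eq:geod_length}, but the optimal $k$ jumps as $\theta$ crosses the cut locus of the cone, so the naive pointwise choice is discontinuous and not a path of curves; the remedy, which is the content of the last sentence of the theorem, is to fix $k$ once and for all, i.e.\ a continuous lift of $\theta\mapsto R^{a,b}(c)(\theta)$ through $\Phi$, and then show that optimising over these global choices --- with a limiting argument when no minimiser exists because the connecting segment would pass through $r=0$ --- realises the infimum of lengths, the pointwise $\min_k$ being only a (generally non-attained) lower bound for it. Everything else is the isometry bookkeeping together with the standard pointwise calculus of $L^2$-metrics.
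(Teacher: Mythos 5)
Your proposal is correct and takes essentially the same route as the paper: transport everything through the isometry $R^{a,b}$ onto $C^\infty([0,2\pi],C^{a,b})$ with the flat $L^2$-metric, observe that geodesics there are the $\theta$-pointwise geodesics of the flat cone, and take the branch $k$ only once for all $\theta$ via a continuous lift; you in fact supply more of the underlying $L^2$-calculus (pointwise curvature, decoupling of the energy, the Cauchy--Schwarz lower bound) than the paper, which refers to \cite{Ebin1970} for these facts. The one step you single out as the main obstacle --- that $\theta$-dependent choices of $k$ cannot beat the single global $k$ --- is exactly what the paper dispatches with its continuity remark, and is settled by lifting the whole path of curves through the covering $\{r>0,\ \ph\in\R\}\to C^{a,b}$ (the domain $[0,1]\times[0,2\pi]$ being simply connected), which forces one $k$ for every $\theta$ and yields the matching lower bound; note too that your squared version of the distance identity is the correct reading of the paper's loosely phrased ``integral over the pointwise distances''.
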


\begin{proof}
Since the cone $C^{a,b}$ is flat in the sense of Riemannian geometry, so is the space $C^\infty([0,2\pi], C^{a,b})$ of curves in $C^{a,b}$ with respect to the $L^2$-metric given by
\[ G_q(h,k) = \int_0^{2\pi} \langle h, k \rangle\, d\theta\,,\]
for $q \in C^\infty([0,2\pi], C^{a,b})$ and $h, k$ tangent vectors at $q$. Note that this metric does not depend on the basepoint $q$. The metric is the same at all points in the space. Note also that the image of the $R$-transform
\[
\on{im} R = C^\infty([0,2\pi], C^{a,b})\,,
\]
equals the set of curves that lie in the cone $C^{a,b}$. It is a property of the $L^2$-metric that geodesics in $\on{im} R$ are given by paths of curves $q(t,\theta)$, such that for each fixed $\theta$, the curve $q(\cdot, \theta)$ is a geodesic in the cone $C^{a,b}$. The length of this geodesic will be given by an expression of the form \eqref{eq:geod_length} with some $k \in \mathbb Z$, not necessarily the smallest one. The length of the path $q(t,\theta)$ in $\on{im} R$ is given by the integral over the lengths of each point-wise path $q(\cdot, \theta)$. Since we have a continuous family of geodesics on $C^{a,b}$, the value for $k$ will be the same for all $\theta \in [0,2\pi]$. Hence, the geodesic distance between two elements $q, \ol q \in \on{im} R$ is given by the integral
\[ \int_0^{2\pi} \on{dist}( q(\theta), \ol q(\theta) d\theta\,, \]
where the minimum over $k \in \mathbb Z$ is taken only once for all values of $\theta$.
\end{proof}

See \cite[Theorem 9.1]{Ebin1970} for more details on the flat $L^2$-metric.

\section{The $R$-transform for closed curves\label{sec:closedR}}

In this section we want to consider the $R$-transform acting on closed curves. First note that Theorem \ref{thm:grmetric} remains valid, if we replace open curves by closed ones.

Restricting our attention to closed curves $C^{\infty}(S^1, \R^2)$ means that we impose additional constraints on the image of the $R$-transform. From the inversion formula
\[ R\i(q)(\theta) = \frac{1}{a^2} \int_0^\theta \sqrt{q_1(\theta)^2 + q_2(\theta)^2} \begin{pmatrix} q_1(\theta) \\ q_2(\theta) \end{pmatrix} d\theta\,, \]
we see that a curve $q$ is the image of a closed curve only if the condition
\[ F(q) = \int_0^\theta \sqrt{q_1(\theta)^2 + q_2(\theta)^2} \begin{pmatrix} q_1(\theta) \\ q_2(\theta) \end{pmatrix} d\theta = 0 \]
is satisfied. Let us denote the image of the $R$-transform, restricted to closed curves, by
\[ \mathscr C^{a,b} = \{ q \in C^{\infty}(S^1, C^{a,b})\,:\, F(q) = 0 \}\,. \]
Note the difference between the cone $C^{a,b}$, which is a submanifold of $\R^3$ and the space $\mathscr C^{a,b}$, which is a submanifold in the space of curves.

\begin{thm}
\label{thm:basis_compl}
The image $\mathscr C^{a,b}$ of the manifold of closed curves under the $R$-transform is a codimension 2 submanifold of the flat space $C^{\infty}(S^1, C^{a,b})$.

A basis of the orthogonal complement $\left(T_q \mathscr C^{a,b}\right)^\perp$ is given by the two vectors
\begin{align*}
U_1(q)&=\frac{1}{\sqrt{q_1^2+q_2^2}}\,\begin{pmatrix} 2q_1^2+q_2^2\\ q_1 q_2 \\ 0 \end{pmatrix} +\frac{2}{a}\sqrt{4b^2-a^2}\begin{pmatrix} 0\\ 0 \\ q_1 \end{pmatrix},\\
U_2(q)&=\frac{1}{\sqrt{q_1^2+q_2^2}}\,\begin{pmatrix} q_1q_2\\ q_1^2+2q_2^2 \\ 0 \end{pmatrix} +\frac{2}{a}\sqrt{4b^2-a^2}\begin{pmatrix} 0\\ 0 \\ q_2 \end{pmatrix}.
\end{align*}
\end{thm}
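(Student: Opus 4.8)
The plan is to exhibit $\mathscr C^{a,b}$ as the zero set of the map $F: C^\infty(S^1,C^{a,b}) \to \R^2$ defined above, check that $F$ is a submersion onto $\R^2$ at every point of $\mathscr C^{a,b}$ (so that $0$ is a regular value), and then identify the kernel of $dF(q)$ as $T_q\mathscr C^{a,b}$ and compute its orthogonal complement explicitly. Since $F$ takes values in the finite-dimensional space $\R^2$, regularity of $0$ gives that $\mathscr C^{a,b}$ is a codimension $2$ submanifold, and the orthogonal complement of the kernel of $dF(q)$ inside $T_q C^\infty(S^1,C^{a,b})$ is exactly the span of the two gradient directions of the components of $F$, pulled back through the $L^2$-metric.

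First I would compute $dF(q).h$ for a tangent vector $h\in T_q C^\infty(S^1,C^{a,b})$. Writing $F(q) = \int_0^{2\pi} \sqrt{q_1^2+q_2^2}\,(q_1,q_2)^T\,d\theta$ (the closedness condition is that this integral over the full period vanishes), the variation is
\[
dF(q).h = \int_0^{2\pi} \left( \frac{\langle (q_1,q_2), (h_1,h_2)\rangle}{\sqrt{q_1^2+q_2^2}} \begin{pmatrix} q_1 \\ q_2 \end{pmatrix} + \sqrt{q_1^2+q_2^2}\begin{pmatrix} h_1 \\ h_2 \end{pmatrix}\right) d\theta\,,
\]
which depends only on the first two components $h_1, h_2$ of $h$. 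Rearranging the integrand as a $2\times 2$ matrix times $(h_1,h_2)^T$, one finds $dF(q).h = \int_0^{2\pi} \tfrac{1}{\sqrt{q_1^2+q_2^2}} M(q)\,(h_1,h_2)^T\,d\theta$ with
\[
M(q) = \begin{pmatrix} 2q_1^2 + q_2^2 & q_1 q_2 \\ q_1 q_2 & q_1^2 + 2q_2^2 \end{pmatrix}\,,
\]
a positive definite matrix (its determinant is $2(q_1^2+q_2^2)^2 > 0$ since $q$ avoids the apex). This immediately shows $dF(q)$ is surjective onto $\R^2$: one can prescribe $(h_1,h_2)$ supported near a single point $\theta_0$ to realize any value of the integral, the relevant local contribution being $M(q(\theta_0))/\sqrt{q_1^2+q_2^2}$ applied to the bump, which is invertible. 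Hence $0$ is a regular value and $\mathscr C^{a,b} = F^{-1}(0)$ is a codimension $2$ submanifold, with $T_q\mathscr C^{a,b} = \ker dF(q)$.

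Next I would identify the orthogonal complement. Since the $L^2$-metric on $C^\infty(S^1,C^{a,b})$ is $G_q(h,k) = \int_0^{2\pi}\langle h,k\rangle\,d\theta$ and $dF(q)$ is a pair of bounded linear functionals, $(\ker dF(q))^\perp$ is spanned by the two $L^2$-gradients $\nabla F^{(1)}, \nabla F^{(2)}$. The key point is that $F$ is an integral functional with no derivatives of $h$, so the Riesz representative of $h \mapsto dF^{(i)}(q).h$ is obtained pointwise: $\nabla F^{(i)}(q)(\theta)$ is just the integrand's coefficient of $h(\theta)$, projected onto $T_{q(\theta)} C^{a,b}$. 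The first two components of these gradients are the two columns (equivalently rows, by symmetry) of $\tfrac{1}{\sqrt{q_1^2+q_2^2}} M(q)$, i.e. $\tfrac{1}{\sqrt{q_1^2+q_2^2}}(2q_1^2+q_2^2, q_1q_2)^T$ and $\tfrac{1}{\sqrt{q_1^2+q_2^2}}(q_1q_2, q_1^2+2q_2^2)^T$ — matching the first vectors in $U_1, U_2$. The third components arise from the constraint that $\nabla F^{(i)}$ must lie tangent to the cone $C^{a,b}$: a vector $(w_1,w_2,w_3)$ in $\R^3$ lies in $T_qC^{a,b}$ iff $(4b^2-a^2)(q_1w_1 + q_2 w_2) = a^2 q_3 w_3$, and on the image $q_3 = \tfrac{\sqrt{4b^2-a^2}}{a}\sqrt{q_1^2+q_2^2}$, so solving for $w_3$ given the first two components forces the factor $\tfrac{2}{a}\sqrt{4b^2-a^2}$ times $(q_1$ or $q_2)$; this is precisely the displayed second summand in $U_1, U_2$. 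I would finish by checking directly that $U_1, U_2$ are $L^2$-orthogonal to $\ker dF(q)$ — equivalently, that for any admissible $h$, $\int_0^{2\pi}\langle U_i(q), h\rangle\,d\theta$ equals the $i$-th component of $dF(q).h$ up to the same scaling — and that $U_1(q), U_2(q)$ are pointwise tangent to $C^{a,b}$ and linearly independent (again because $\det M(q) \neq 0$). The main obstacle is bookkeeping: getting the cone-tangency correction in the third slot to come out with exactly the constant $\tfrac2a\sqrt{4b^2-a^2}$ requires carefully using the relation $q_3^2 = \tfrac{4b^2-a^2}{a^2}(q_1^2+q_2^2)$ on $\mathscr C^{a,b}$ rather than treating $q_3$ as free, and verifying that the degenerate case $4b^2 = a^2$ (where the cone degenerates to the punctured plane and the third component drops out) is consistent with the formulas.
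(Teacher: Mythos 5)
Your route is the same as the paper's: compute $D_{q,h}F_i$, read off the ambient $L^2$-gradients (your matrix $\tfrac{1}{\sqrt{q_1^2+q_2^2}}M(q)$ is exactly the pair $\on{grad}^{L^2}F_1(q),\on{grad}^{L^2}F_2(q)$ with vanishing third component), note that the bump-function argument makes $dF(q)$ surjective so that $F^{-1}(0)$ has codimension $2$, and then add a multiple of $(0,0,1)^T$ so that the corrected vectors satisfy the pointwise cone-tangency relation $(4b^2-a^2)(q_1h_1+q_2h_2)=a^2q_3h_3$. This reproduces the displayed $U_1,U_2$, and up to that point it parallels the paper's computation.

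The gap is in the orthogonality step. Adjusting only the third component is a projection onto $T_{q(\theta)}C^{a,b}$ \emph{along} $(0,0,1)^T$, and this direction is not normal to that tangent plane: the normal is proportional to $N=\bigl((4b^2-a^2)q_1,(4b^2-a^2)q_2,-a^2q_3\bigr)$, which is parallel to $(0,0,1)^T$ only when $4b^2=a^2$. So your claim that the Riesz representative (the pointwise \emph{orthogonal} projection of $\on{grad}^{L^2}F_i$) keeps the first two components of $\tfrac{1}{\sqrt{q_1^2+q_2^2}}M(q)$ unchanged is false for $4b^2>a^2$; the true orthogonal projection $\on{grad}^{L^2}F_i-\tfrac{\langle \on{grad}^{L^2}F_i,N\rangle}{|N|^2}N$ also changes the first two components and differs from $U_i$. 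Accordingly, the verification you defer to the end fails: for a cone-tangent $h$ one has $\int_0^{2\pi}\langle U_i,h\rangle\,d\theta = dF^{(i)}(q).h+\tfrac{2\sqrt{4b^2-a^2}}{a}\int_0^{2\pi} q_i h_3\,d\theta$, and the extra term does not vanish on $\ker dF(q)$; writing $h=D_{c,u}R$ it equals $(4b^2-a^2)\int_{S^1}\langle D_su,v\rangle v_i\,ds$, which for the unit circle and $u=\cos\theta\,v$ gives $(4b^2-a^2)\pi\ne 0$ for $i=1$. Thus for $4b^2>a^2$ your argument only shows that $U_1,U_2$ span a complement transverse to $T_q\mathscr C^{a,b}$, not the $L^2$-orthogonal complement; everything you assert is consistent only in the SRVT case $4b^2=a^2$, where the third component disappears. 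Note that the paper is deliberately weaker at exactly this point: its proof constructs $U_i$ as $\on{grad}^{L^2}F_i(q)+\la_i(0,0,1)^T$ with $\la_i=2q_i\sqrt{4b^2-a^2}/a$ and explicitly calls this a projection ``not necessarily orthogonal''; it never performs the orthogonality check you propose, and your check, if carried out, would expose this discrepancy rather than complete the proof.
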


\begin{proof}
A basis of $(T_q \mathscr{C}^{a,b})^\perp$ can be computed by projecting the gradients of the two components of the function $F = (F_1,F_2)$ to the tangent space of $C^{\infty}(S^1, C^{a,b})$. Let $q \in \mathscr C^{a,b}$ be a curve and $h \in C^\infty(S^1, \R^3)$ a tangent vector. Then
\begin{align*}
 D_{q,h} F_1 &= \int^{2\pi}_{0} \frac{q_1h_1+q_2h_2}{\sqrt{q_1^2+q_2^2}}\,q_1+\sqrt{q_1^2+q_2^2}\,h_1\, d\theta\\
 D_{q,h} F_2 &= \int^{2\pi}_{0} \frac{q_1h_1+q_2h_2}{\sqrt{q_1^2+q_2^2}}\,q_2+\sqrt{q_1^2+q_2^2}\,h_2 \,d\theta.
\end{align*}
Thus the two gradients are
\begin{align*}
\on{grad}^{L^2} F_1(q)&=\frac{q_1}{\sqrt{q_1^2+q_2^2}}\,\begin{pmatrix} q_1\\ q_2 \\ 0 \end{pmatrix} +\sqrt{q_1^2+q_2^2}\begin{pmatrix} 1\\ 0 \\ 0 \end{pmatrix},\\
\on{grad}^{L^2} F_2(q)&=\frac{q_2}{\sqrt{q_1^2+q_2^2}}\,\begin{pmatrix} q_1\\ q_2 \\ 0 \end{pmatrix} +\sqrt{q_1^2+q_2^2}\begin{pmatrix} 0\\ 1 \\ 0 \end{pmatrix}.
\end{align*}
Differentiating the governing equation for the cone \eqref{formula_cone} we obtain that the tangent space $T_q C^{\infty}(S^1, C^{a,b})$ is given by all curves $h$, which satisfy the pointwise condition
\begin{equation}
\label{eq:tan_cone}
 (4b^2 - a^2) (q_1 h_1 + q_2 h_2) = a^2 q_3 h_3 \,.
\end{equation}
A projection (not necessarily orthogonal) of the vectors $\on{grad}^{L^2} F_i(q)$ can be found by choosing $\la_i$ such that the vector $\on{grad}^{L^2} F_i(q) + \la_i (0, 0, 1)^T$ satisfies \eqref{eq:tan_cone}. 
A simple computation shows that for the component $F_i$ the right value is $\la_i = 2q_i\frac{\sqrt{4b^2 - a^2}}{a}$. This completes the proof.
\end{proof}

We can now use the Gram-Schmidt procedure to compute an orthonormal basis of $(T_q \mathscr{C}^{a,b})^\perp$, which is better suited for computational purposes. The formulas however do not reveal more structure.

\subsection{The curvature of the space $\mathscr{C}^{2a,a}$\label{sec:curvature}}

The curvature of a Riemannian manifold is a symmetric bilinear form on the space of skew bivectors. The normalized quadratic version is called sectional curvature, which in turn is the Gau\ss{} curvature of the geodesic 2-submanifold spanned by the bivector.

As mentioned previously  the $R$-transform has no third component in the case $b=2a$ and the cone $C^{2a,a}$ reduces to the space $\R^2\setminus0$. 
Let $\widetilde{U_1}(q),\widetilde{U_2}(q)\in C^{\infty}(S^1,\R^2)$ be the orthonormal basis of $(T_q\widetilde{\mathcal{C}}^{2a,a})^\bot$ derived via the Gram-Schmidt procedure from the  basis $U_1(q),U_2(q)$. 
Using this ONB we can express the curvature of the cone $\widetilde{\mathcal{C}}^{2a,a}$. 

To do so we take a constant vector field $q\mapsto (q,h)$ on $C^{\infty}(S^1,\mathbb R^2)$ and 
its orthonormal projection
$$
X_h(q)= h-\langle \widetilde{U_1}(q),h \rangle\widetilde{U_1}(q)
-\langle \widetilde{U_2}(q),h \rangle\widetilde{U_2}(q)\in T_q\mathscr{C}^{2a,a}\,.
$$
Then we take the flat covariant derivative in $\Imm(S^1,\mathbb R^2)$
\begin{align*}
\ol\nabla_{X_h(q)}X_k(q) = D_{q,X_h(q)}X_k(q) = d(X_k(q))(X_h(q))\,.
\end{align*}
The orthonormal projection of this vector field onto $(T_q\mathscr{C}^{2a,a})^\bot$ is then equal to the value of the second fundamental form $S\in (T_q\mathscr{C}^{2a,a})^\bot$, i.e.:
\begin{align*}
S(X_h(q),X_k(q))&=\langle d(X_k(q))(X_h(q)), \widetilde{U_1}(q)\rangle_{L^2}  \widetilde{U_1}(q)\\ &\qquad+\langle d(X_k(q))(X_h(q)), \widetilde{U_2}(q)\rangle_{L^2} \widetilde{U_2}(q)\,. 
\end{align*}
The curvature of $\mathscr{C}^{2a,a}$ at $q$ is then given by the 
Gau{\ss}-equation \cite[26.4]{MichorH}: 
\begin{align*}
&\langle R(X_h(q),X_k(q))X_k(q),X_h(q) \rangle_{L^2} 
\\&\qquad
= -\|S(X_h(q),X_k(q))\|^2_{L^2}
+\langle S(X_k(q),X_k(q)),S(X_h(q),X_h(q)) \rangle_{L^2}\,.
\end{align*}

\section{The induced metric on shape space $\mathcal S$\label{sec:shape_space}}

In the rest of the paper we will mainly consider closed curves. However the results can be easily reformulated for the case of open curves.
 
The shape space $\mathcal S$ denotes the space of unparameterized plane curves, which can be represented as the quotient $\mathcal S:=\Imm(S^1,\R^2)/\Diff(S^1)$ of parameterized curves modulo parameterizations. Associated to this quotient is the natural projection
$\pi: \Imm(S^1,\R^2)\to  \Imm(S^1,\R^2)/\Diff(S^1)\,,$
which maps a curve $c$ to its image $C = \pi(c)$.

Given a reparameterization invariant metric $G$ on $\Imm(S^1, \R^2)$ there exists a unique Riemannian metric $\ol G$ on shape space, such that the projection $\pi$ is a Riemannian submersion. Associated to the projection $\pi$ is the decomposition of the tangent bundle $T\Imm(S^1,\R^2)$ into horizontal and vertical parts. The vertical bundle $\on{Ver}$ is the kernel of the projection $\pi$, i.e., $\on{Ver} = \on{ker} T\pi$, and the horizontal bundle 
$\on{Hor}(c) = \on{Ver}(c)^\perp \subset T_c \Imm(S^1,\R^2)$
is defined as the orthogonal complement of $\on{Ver}$ with respect to the Riemannian metric $G$. The action of $\on{Diff}(S^1)$ on $\Imm(S^1, \R^2)$ induces an infinitesimal action of its Lie algebra $\X(S^1)$, given by
\[\ze_\mu(c) = c'\mu \in T_c \Imm(S^1, \R^2)\] 
with $\mu \in \X(S^1)$. This defines a vector field $\ze_\mu$ on the space $\Imm(S^1, \R^2)$. The vertical bundle consists of the image of all infinitesimal vector fields
\[ \on{Ver}(c) = \left\{ \ze_\mu(c)\,:\, \mu \in \X(S^1) \right\}\,. \]
When we fix a curve $c$, there is a one-to-one correspondence between the vector fields on the circle $\X(S^1)$ and the space $\on{Ver}(c)$ given by the map $\mu \mapsto \ze_\mu(c)$.

From the theory of Riemannian submersions \cite{MichorH} it follows that:
\begin{itemize}
\item Geodesics on shape space $\mathcal S$ with respect to $\ol G$ correspond to horizontal geodesics on the manifold $\Imm(S^1,\R^2)$ of parameterized curves with respect to $G^{a,b}$. Horizontal geodesics on $\Imm(S^1, \R^2)$ are those, whose tangent vector lies in the horizontal bundle, i.e. ,$\p_t c(t) \in \on{Hor}(c(t))$.
\item The geodesic distance on shape space can be computed using the formula
\[ \dist(C_0,C_1)= \underset{\ph\in\Diff(S^1)}{\on{inf}}\dist(c_0,c_1\circ\ph),\]
where $\dist$ on the right hand side denotes the geodesic distance on the space $\Imm(S^1,\R^2)$ with respect to the $G^{a,b}$-metric.
\item The curvature of the shape space can be calculated using O'Neil's curvature formula, see for example  \cite{MichorH}. 
Given two orthonormal vector fields $X,Y$ on the space $\mathcal S$ of unparameterized curves, the sectional curvature $K$ is given by
\[
K_{\mathcal S}(X,Y)=K_{\Imm}(\wt X,\wt Y)+\frac34|[\wt X,\wt Y]^{\on{vert}}|^2\,.
\]
Here $\wt X, \wt Y$ are horizontal lifts of the vector fields $X,Y$ to $\Imm(S^1,\R^2)$ and $[\wt X,\wt Y]^{\on{vert}}$ denotes
the vertical projection of the vector field $[\wt X,\wt Y]$.
\end{itemize}
We  can now apply this construction to the metric $G^{a,b}$ and the projection $\pi: \Imm(S^1,\R^2)/\on{Tra}\to \mathcal S/\on{Tra}$
\begin{thm}
Consider the shape space of open curves modulo translations  with the metric that is induces from the $G^{a,b}$-metric on parameterized curves. The curvature of this space is non-negative.
\end{thm}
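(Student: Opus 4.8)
The plan is to apply O'Neill's curvature formula, as recalled just above, to the Riemannian submersion $\pi\colon \Imm([0,2\pi],\R^2)/\on{Tra} \to \mathcal S_{\mathrm{open}}/\on{Tra}$, exploiting the one decisive fact that the total space is flat. By Theorem~\ref{thm:open_flat} the $G^{a,b}$-metric on $\Imm([0,2\pi],\R^2)/\on{Tra}$ has vanishing sectional curvature: the $R^{a,b}$-transform is an isometry onto the space $C^\infty([0,2\pi],C^{a,b})$ of curves taking values in the cone $C^{a,b}$, this space carries the flat $L^2$-metric, and the cone itself is intrinsically flat via the isometric chart $q(r,\ph)$ of Section~\ref{sec:imageRmap}. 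Hence the sectional curvature $K_{\Imm}$ of the total space vanishes identically.

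Granting this, the conclusion follows in one line. Let $X,Y$ be orthonormal vector fields on $\mathcal S_{\mathrm{open}}/\on{Tra}$ and $\wt X,\wt Y$ their horizontal lifts to $\Imm([0,2\pi],\R^2)/\on{Tra}$. O'Neill's formula gives
\[
K_{\mathcal S}(X,Y) = K_{\Imm}(\wt X,\wt Y) + \tfrac34\,\bigl|[\wt X,\wt Y]^{\on{vert}}\bigr|^2 = \tfrac34\,\bigl|[\wt X,\wt Y]^{\on{vert}}\bigr|^2 \;\ge\; 0,
\]
so the curvature of the shape space of open curves modulo translations is non-negative.

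The real work lies not in any computation but in justifying the submersion framework. Because $G^{a,b}$ is a weak Riemannian metric, the existence of the horizontal bundle $\on{Hor}(c)=\on{Ver}(c)^\perp$ as a smooth complement of $\on{Ver}(c)$, the smoothness of the horizontal projection and of the induced metric $\ol G$, the manifold structure of $\mathcal S_{\mathrm{open}}/\on{Tra}$ over the open dense set of free immersions, and the validity of O'Neill's formula in this infinite-dimensional setting are not automatic; I would import all of these from the general theory of Riemannian submersions of \cite{MichorH}, exactly as the earlier part of Section~\ref{sec:shape_space} does. Once they are in place nothing further is required. Note that the same argument proves non-negativity for closed curves only when the total space $\mathscr C^{a,b}$ happens to be flat; in general it is not (Section~\ref{sec:curvature}), which is why the statement is restricted to open curves.
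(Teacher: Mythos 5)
Your proof is correct and is essentially the paper's own argument: flatness of $\Imm([0,2\pi],\R^2)/\on{Tra}$ from Theorem~\ref{thm:open_flat} kills the $K_{\Imm}$ term in O'Neill's formula, leaving only the manifestly non-negative term $\tfrac34\,\bigl|[\wt X,\wt Y]^{\on{vert}}\bigr|^2$. Your added remarks on justifying the submersion framework for a weak metric (imported from \cite{MichorH}) are a sensible precaution that the paper handles the same way, by citation, in Section~\ref{sec:shape_space}.
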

\begin{proof}
Theorem \ref{thm:open_flat} shows that the space $\Imm([0,2\pi],\R^2)/\on{Tra}$ of parameterized curves is flat, which implies $K_{\Imm}(\wt X, \wt Y) = 0$ in O'Neil's curvature formula. The only remaining term is clearly non-negative:
$
K(X,Y)=\frac34|[\wt X,\wt Y]^{\on{vert}}|^2\,.
$
\end{proof}

\section{Numerical computation of geodesics for parameterized curves}
\label{sec:numerics}

In this section we will describe a way to numerically compute the shortest path  between two parameterized closed curves $c_0$ and $c_1$. We will consider the special case of the metric where $4b^2=a^2=1$. In this case the $R$-transform maps plane curves into plane curves,
\begin{align*}
&R:\;\Imm([0,2\pi],\mathbb R^2)/\on{Tra}\to  C^\infty([0,2\pi],\mathbb R^2)\,,\\ &R(c) = |c'|^{1/2} v\,.
\end{align*}
The cone $C^{a, a/2}$ regarded as a subset of $\R^2$ simplifies to $C^{a, a/2} = \R^2 \setminus 0$. Thus the image of the $R$-transform of open curves is the set
$\on{im} R = C^\infty([0,2\pi], \R^2\setminus 0)$
of all curves that avoid the origin, which is an open subset of the space of all curves. Restricting ourselves to closed curves, we obtain from Theorem \ref{thm:basis_compl} that the image $\mathscr C$ of the $R$-transform is a codimension 2 submanifold of $C^\infty(S^1, \R^2)$. 
\begin{figure*}
\begin{center}
\includegraphics[width=\textwidth]{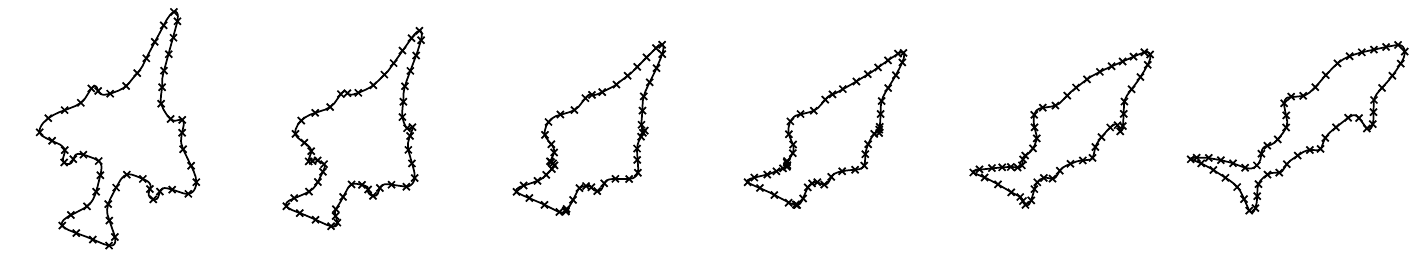}
\end{center}
\caption{Example of a minimal geodesic between two parameterized shapes.}
\label{fig:geoparam}
\end{figure*}

Geodesics in the space $\Imm(S^1, \R^2)/\on{Tra}$ correspond under the $R$-transform to geodesics on $\mathscr C$ with the Riemannian metric induced by the flat $L^2$-metric on $C^\infty(S^1, \R^2)$.
Using Theorem \ref{thm:basis_compl} we can implement a projection operator which, given a curve $q \in \mathscr C$ and a tangent vector $p$ to $q$, computes its orthogonal projection onto $T_q \mathscr C$.

\begin{algorithmic}
\Function{Proj}{$q, p$}
\State $U_1, U_2 \gets \text{ ONB of } (T_q \mathscr C)^\perp$
\State \textbf{return} $p - \langle p, U_1 \rangle\, U_1 - \langle p, U_2 \rangle\, U_2$
\EndFunction
\end{algorithmic}

The forward computation of the geodesic starting from a curve $q \in \mathscr C$ with initial velocity $p \in T_q \mathscr C$ can be seen as a constrained optimization problem. A geodesic on $\mathscr C$ is a minimum of
\[ 
\int_0^1 \int_{S^1} \frac 12 |u(t)|^2 + \langle p(t), \dot q(t) - u(t) \rangle\,d\th  + \la(t) \cdot F(q(t))\, dt\;.
\]
Since we use the Euclidean metric on the ambient space, we will not distinguish between velocities $u(t)$ and momenta $p(t)$. The variable $\la(t) \in \R^2$ is a Lagrange multiplier, which enforces the constraint $F(q)=0$ with
\begin{equation}
\label{closed_constr}
F(q) = \int_{S^1} |q(\theta)| q(\theta) \,d\theta\,,
\end{equation}
and the cone $\mathscr C = F\i(0)$. These are holonomic constraints and there exist several methods, that can solve this problem; see, e.g.,~\cite{LeimkuhlerReich2004} for an overview. We have chosen to use the RATTLE algorithm~\cite{Andersen1983}, which is a symplectic integrator that preserves the constraints through an explicit projection of the velocity vector, forcing it to lie on the constraint surface. The variational equations in continuous time have the form
\begin{align*}
\dot q(t) &= p(t) \\
\dot p(t) &= \la(t) \cdot \nabla F(q(t))\,.
\end{align*}

We denote by $N$ the number of time-steps used by the integrator and set $\De t = \tfrac 1N$.

\begin{algorithmic}
\Function{Exp}{$q, p, N$}
\State $q_0, p_0 \gets q, p$
\For {$i \gets 0,\ldots,N-1$}
\State $\ol p \gets p_i + \tfrac {\De t}{2} \la \cdot \nabla F(q_i)$
\State $q_{i+1} \gets q_i + \De t\, \ol p$
\State estimate initial value for $\la$ to enforce constraint
\State iteratively adapt $\la$ so that $q_{i+1} \in \mathscr C$ 
\State $p_{i+1} \gets \ol p + \tfrac {\De t}{2} \mu \cdot \nabla F(q_{i+1})$
\State $\mu$ chosen such that $p_{i+1} \in T_{q_{i+1}} \mathscr C$
\EndFor
\State \textbf{return} $q, p$
\EndFunction
\end{algorithmic}

The next step is to solve the boundary value problem for parameterized shapes. Given two curves $q_0, q_1 \in \mathscr C$ we need to find the initial velocity $p$, such that the endpoint $\on{Exp}_{q_0} p$ is close to $q_1$. We do so via a fixed-point iteration, using the fact that our space $\mathscr C$ is a submanifold of a flat space.

\begin{algorithmic}
\Function{Log}{$q_0,q_1,N$}
\State $p \gets \De t \,  \text{\Call{Proj}{$q_0, q_1-q_0$}}$
\State $\wt q \gets \text{\Call{Exp}{$q_0, p, 1$}}$
\State $p \gets $ \Call{Proj}{$q_0, \wt q - q_0$}
\While{$\left|\text{\Call{Exp}{$q_0, p, N$}} - q_1\right| > \ep}$
\State $\wt q \gets $\Call{Exp}{$q_0, p, N$}
\State $p \gets p + \al\, \text{\Call{Proj}{$q_0, q_1-\wt q$}}$
\EndWhile
\State \textbf{return} $p$
\EndFunction
\end{algorithmic}
To construct the initial guess for $p$ we project the straight line between $q_0$ and $q_1$ to $T_{q_0}\mathscr C$. This would be a valid initial guess. Based on experiments, however we found that computing one step of the forward-shooting algorithm to obtain $\wt q$ and projecting the straight line $\wt q - q_0$ back to $T_{q_0}\mathscr C$ leads to a better initial guess. The iteration consists of computing the endpoint $\wt q = \on{Exp}_{q_0} p$ of the geodesic with  initial velocity $p$ and updating $p$ in the direction $q_1 - \wt q$.

The parameter $\al$ controls the step-size of the iteration. It can be either fixed for the whole minimization or chosen adaptively. In our experiments we chose $\al$ as large as possible, while still ensuring that the distance decreased, compared to the last iteration.

It is also possible to view this iteration as an approximation to a gradient descent for the geodesic distance 
\[ E(p) = \frac 12 \on{dist}(\on{Exp}_{q_0} p, q_1)^2\,. \]
The derivative of this function is
\[ T_p E(\de p) = G_{\on{Exp}_{q_0} p}\left(-\on{Log}_{\on{Exp}_{q_0} p} q_1, T_p \on{Exp}_{q_0}(\de p)\right)\,. \]
If we approximate the logarithm by the straight line, $\on{Log}_{\on{Exp}_{q_0} p} q_1 \approx q_1 - \on{Exp}_{q_0} p$, the differential of the exponential map by the identity, $T_p \on{Exp}_{q_0}(\de p) \approx \de p$ and the parallel transport along the geodesic $t \mapsto \on{Exp}_{q_0}(tp)$ from $\on{Exp}_{q_0} p$ to $q_0$ by the projection to $T_q \mathscr C$, then we obtain
\[ T_p E(\de p) \approx G_{q_0} \left( -\on{Proj}_{q_0}\left(q_1 - \on{Exp}_{q_0} p\right), \de p  \right) \]
and hence the approximation of the gradient is given by
\[ \nabla_p E \approx -\on{Proj}_{q_0}\left(q_1 - \on{Exp}_{q_0} p\right)\,. \]

For the spatial discretization we replaced the curve $q \in \mathscr C$ by a finite number of points $(q^j)_{1\leq j \leq n}$. Spatially discrete geodesics correspond to minima of
\[
\int_0^1 \sum_{j=1}^n \tfrac 12|u^j(t)|^2 + \langle p^j(t), \dot q^j(t) - u^j(t) \rangle + \la(t) \cdot F(q^j)\,dt.
\]
This variational principle now describes a finite-dimensional Hamiltonian system and we can apply the above algorithms without changes to compute geodesics.

This approach to discretization works for all metrics that can be represented via a transform as in  Section \ref{sec:relating}, provided we can characterize the image of this transform, similarly to \eqref{closed_constr}.
\begin{table}
\begin{center}
\begin{tabular}{|cc|cc|cc|}
 && \multicolumn{2}{|c|}{I$_1 \to $ I$_2$} & \multicolumn{2}{|c|}{I$_2 \to $ I$_1$} \\
I$_1$& I$_2$&  distance & time (s) & distance& time (s)  \\
cat &cow & 17.106 & 1.93 & 17.106 & 2.25 \\
cat &dog & 21.349 & 2.59 & 21.349 & 2.24 \\
cat &donkey & 25.273 & 2.59 & 25.273 & 2.59  \\
cow &dog  & 18.389 & 2.22 & 18.389 & 2.21  \\
cow &donkey & 20.206 & 2.56 & 20.206 & 2.26  \\
dog &donkey & 14.983 & 2.25 & 14.983 & 1.91\\
shark&airplane & 20.488 & 2.65 & 20.488 & 2.34 \\
\end{tabular}
\end{center}
\caption{Geodesic distances between parameterized shapes, together with the time in seconds to compute the distance. It can be seen the the distances are symmetric between pairs of parameterized shapes.}
\label{tab:distance}
\end{table}
\subsection{Numerical Results\label{sec:num1}}

In this section we present experiments demonstrating our numerical computations with parameterized curves. The algorithms described in the previous section were implemented in Python using the NumPy and SciPy libraries. The shapes used for the experiments come from the database of closed binary shapes collected by the LEMS Vision Group at Brown university (\url{http://www.lems.brown.edu/~dmc}). Each curve was initially parameterized by a set of 300 points positioned equidistantly along the curve, and each shape is centered at an arbitrary point (the position of the shape does not matter, as the metric is invariant under translations). The geodesics were computed using $N=25$ time steps. 

Figure~\ref{fig:geoparam} shows a set of shapes along the minimal geodesic path between two pairs of shapes. The geodesics are sampled at timesteps 0, 5, 10, 15, 20 and 25. The first and last images in each row thus show the template and target curves respectively. The template curve is always parameterized proportional to arc length. Table~\ref{tab:distance} shows the geodesic distances forwards and backwards between parameterized shapes, along with the time for the computation (based on a Python implementation running on a virtual server with 3.6Gb of memory and access to a dual core 3GHz Xeon processor).  It can be seen that these distances are symmetric, as expected. 

\begin{figure*}
\begin{center}
\includegraphics[width=0.9\textwidth]{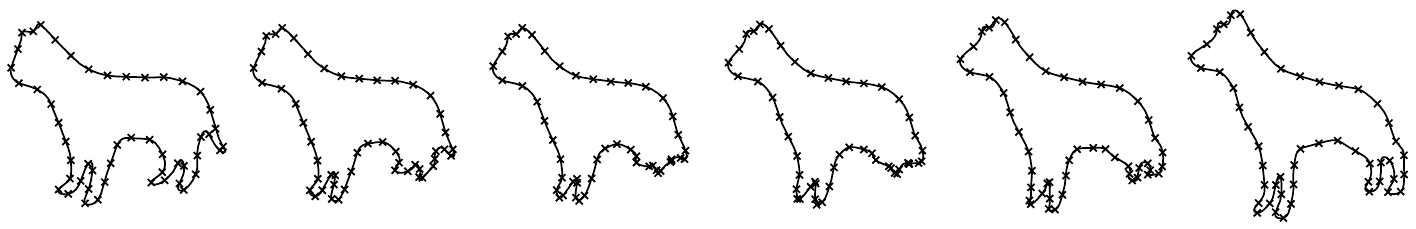}
\includegraphics[width=0.9\textwidth]{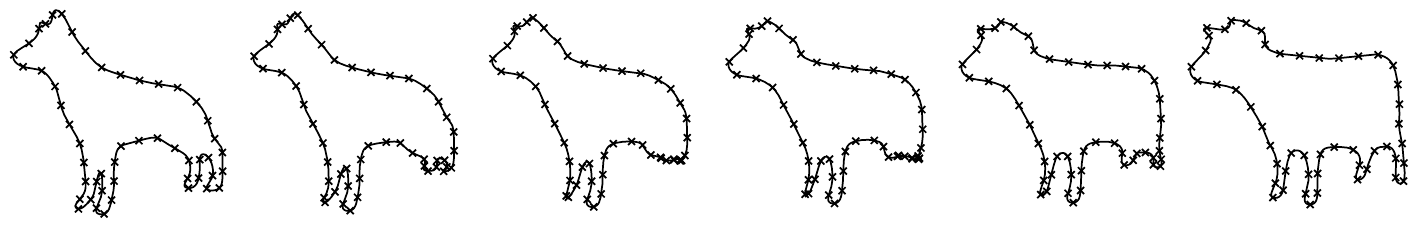}
\includegraphics[width=0.9\textwidth]{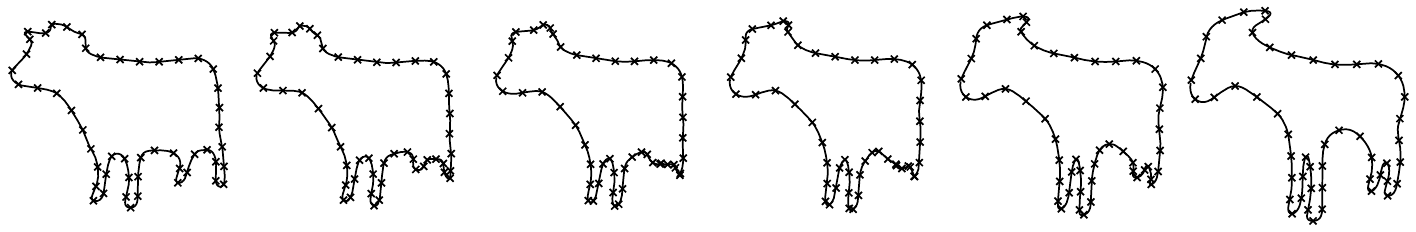}
\includegraphics[width=0.9\textwidth]{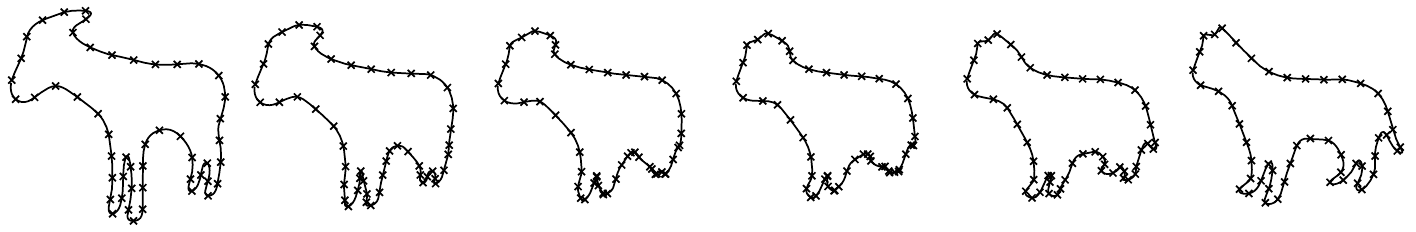}
\end{center}
\caption{Examples of geodesics between several parametrized shapes. The geodesics shown are from the cat to the dog in the first row, from the dog to the cow in the second, from the cow to the donkey in the third and from the donkey back to the cat in the last row thus forming a geodesic quadrangle.}
\label{fig:quadrangle}
\end{figure*}

\section{Numerical Computation of geodesics on unparameterized curves\label{sec:unparam}}
\subsection{Finding the optimal parameterization}
\label{sec:num_shape}

In this section we build on the previous section and describe our method for finding geodesics between unparameterized shapes. As was explained in Section \ref{sec:shape_space},  this corresponds to finding horizontal geodesics or equivalently finding the minimum of
\[ \dist(C,D)= \underset{\ps\in\Diff(S^1)}{\on{inf}}\dist(c,d\circ\ps),\]
with $C = \pi(c)$, $D = \pi(d)$ and where $\on{dist}$ on the right denotes the geodesic distance on parameterized curves, computed as described in Section \ref{sec:numerics}. We will compute this minimum using a gradient descent algorithm on
\begin{equation}
\label{eq:en_phi}
 E(\ph) = \frac 12 \on{dist}(c, d\circ\ph\i)^2\,.
\end{equation}
We use $d\circ\ph\i$ instead of $d\circ\ph$, in order to have a left action of $\on{Diff}(S^1)$ on the space of curves. We want to compute the right-trivialized gradient $\nabla_\ph E$ of $E$, defined as
$\langle \nabla_\ph E, \mu \rangle_{\X(S^1)} = T_\ph E(\mu \circ \ph)\,$
for $\mu \in \X(S^1)$ and a choice of an inner product $\langle\cdot,\cdot\rangle_{\X(S^1)}$ on $\X(S^1)$.

The gradient of
\[ G(p) = \frac 12 \on{dist}(p, q)^2 \]
is given by
$\nabla_p G = -\on{Log}_p q\,.$
Hence
\[ T_\ph E(\de \ph) = G_{d\circ\ph\i}\left(-\on{Log}_{d\circ\ph\i} c,  -(d\circ\ph\i)'\, \de\ph\circ\ph\i\right) \]
and by writing $\de \ph = \mu \circ\ph$ for some $\mu \in \mf X(S^1)$ we get
\[ T_\ph E(\mu \circ \ph) = G_{d\circ\ph\i}(\on{Log}_{d\circ\ph\i} c,  (d\circ\ph\i)' u)\,. \]
After introducing an inner product on $\mf X(S^1)$, we could compute the gradient of $E(\ph)$ by solving
\[ \langle \nabla_\ph E, u \rangle_{\mf X(S^1)} = G_{d\circ\ph\i}(\on{Log}_{d\circ\ph\i} c,  (d\circ\ph\i)' u)\,. \]
There is however a better expression for the gradient, which fits better with the action of $\on{Diff}(S^1)$ on $\Imm(S^1,\R^2)$.
Consider the related function
\[ \ol E(\ph) = \frac 12 \on{dist}(c\circ\ph\i, d)^2\,. \]
Its gradient can be computed using the invariance of the metric under $\ph$, i.e., $\ol E(\ph) = E(\ph\i)$, which implies
\begin{align*}
T_\ph \ol E(\de \ph)
&= T_{\ph\i} E\left(-(\ph\i)' \de\ph\circ\ph\i\right) \\
&= -T_{\ph\i} E\left( \left(\frac 1{\ph'} \de\ph\right) \circ\ph\i\right) \\
&= G_{d\circ\ph}\left(-\on{Log}_{d\circ\ph} c, (d\circ\ph)'\,\frac 1{\ph'} \de\ph \right) \\
&= G_{d\circ\ph}\left(-\on{Log}_{d\circ\ph} c, (d'\circ\ph)\, \de\ph \right)\,.
\end{align*}
Using the invariance of $\on{Log}$ and the metric $G_c$ under reparameterizations,
\begin{align*}
 \on{Log}_{d\circ\ph} c\circ\ph &= \left(\on{Log}_d c\right)\circ\ph \\
 G_{d\circ\ph}(h\circ\ph, k\circ\ph) &= G_d(h, k)\,,
\end{align*}
we get
\begin{align*}
T_\ph \ol E(\de \ph)
&= G_{d\circ\ph}\left(-\left(\on{Log}_{d} (c\circ\ph\i)\right)\circ\ph, (d'\circ\ph)\, \de\ph \right) \\
&= G_{d}\left(-\on{Log}_{d} (c\circ\ph\i), d'(\de\ph\circ\ph\i) \right)\,,
\end{align*}
and hence the gradient can be obtained by solving
\[ \langle \nabla_\ph \ol E, \mu \rangle_{\mf X(S^1)} = -G_{d}(\on{Log}_{d}(c\circ\ph\i),  d' \mu)\,. \]
Since the functions $E$ and $\ol E$ differ only by exchanging $c$ and $d$, we can also express the gradient of $E$ by:
\begin{equation}
\label{grad_expr2}
\langle \nabla_\ph E, \mu \rangle_{\mf X(S^1)} = -G_{c}(\on{Log}_{c}(d\circ\ph\i),  c' \mu)\,.
\end{equation}

As the inner product on the space of vector fields we can use the one induced by the identification of $\X(S^1)$ with the vertical space at the curve $c$ using the infinitesimal action $\ze_\mu(c) = c'\mu$. The inner product is thus given by
\begin{equation}
\label{induced_metric_s1}
 \langle \mu, \nu \rangle_{\mf X(S^1)} = G_c(c'\mu, c'\nu)\,.
\end{equation}

\begin{thm}
\label{thm:grad}
The (right-trivialized) gradient of the energy
\[ E(\ph) = \frac 12 \on{dist}(c, d\circ\ph\i)^2 \]
with respect to the inner product \eqref{induced_metric_s1} is given by the vector field on $S^1$ corresponding to the vertical projection of $-\on{Log}_c(d\circ\ph\i)$, i.e.
\[ c' \nabla_\ph E = -\on{Ver}_c\left( \on{Log}_c(d\circ\ph\i)\right)\,. \]
It can be computed by solving the equation
\[ G_c(c' \nabla_\ph E, c' \mu) = -G_{c}(\on{Log}_{c} (d\circ\ph\i),  c' \mu)\quad \forall \mu \in \mf X(S^1) \]
for $\nabla_\ph E \in \mf X(S^1)$.
\end{thm}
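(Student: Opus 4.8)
The plan is to obtain the statement simply by substituting the definition \eqref{induced_metric_s1} of the inner product on $\mf X(S^1)$ into the already-derived identity \eqref{grad_expr2}, and then reading off the geometric meaning of the resulting equation. Recall that the right-trivialized gradient is by definition the unique $\nabla_\ph E \in \mf X(S^1)$ with $\langle \nabla_\ph E, \mu\rangle_{\mf X(S^1)} = T_\ph E(\mu\o\ph)$ for all $\mu$; the computation preceding the theorem has already established that this equals $-G_{c}(\on{Log}_{c}(d\o\ph\i), c'\mu)$.

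So first I would take the inner product on $\mf X(S^1)$ to be the one induced by the infinitesimal action, $\langle \mu,\nu\rangle_{\mf X(S^1)} = G_c(c'\mu, c'\nu)$, and plug it into the left-hand side of \eqref{grad_expr2}. This yields the characterization
\[ G_c(c'\nabla_\ph E, c'\mu) = -G_{c}\bigl(\on{Log}_{c}(d\o\ph\i),\, c'\mu\bigr) \qquad \forall\, \mu \in \mf X(S^1), \]
which is precisely the second assertion of the theorem; existence and uniqueness of $\nabla_\ph E$ follow from the fact that $\mu\mapsto c'\mu$ is a linear isomorphism of $\mf X(S^1)$ onto $\on{Ver}(c)$ and that $G_c$ is positive definite on $\on{Ver}(c)$. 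Next I would rewrite the displayed identity as: the vector $c'\nabla_\ph E + \on{Log}_c(d\o\ph\i) \in T_c\Imm(S^1,\R^2)$ is $G_c$-orthogonal to every element of $\on{Ver}(c) = \{c'\mu : \mu \in \mf X(S^1)\}$. Since $c'\nabla_\ph E$ itself lies in $\on{Ver}(c)$, this says exactly that $c'\nabla_\ph E$ is the orthogonal projection of $-\on{Log}_c(d\o\ph\i)$ onto the vertical bundle, i.e. $c'\nabla_\ph E = -\on{Ver}_c\bigl(\on{Log}_c(d\o\ph\i)\bigr)$, which is the first assertion.

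The genuinely delicate points, and the reason this is phrased as a theorem, are the well-definedness issues in the infinite-dimensional setting rather than the algebra above: one needs the logarithm $\on{Log}_c(d\o\ph\i)$ to exist, i.e. a length-minimizing geodesic between $c$ and $d\o\ph\i$ in $\Imm(S^1,\R^2)$ (a completeness matter the paper returns to later), and one needs the orthogonal decomposition $T_c\Imm(S^1,\R^2) = \on{Ver}(c)\oplus\on{Hor}(c)$ to exist so that the vertical projection $\on{Ver}_c$ is defined. Granting the standing hypotheses of Section~\ref{sec:shape_space} — that $G$ is reparameterization invariant and $\pi$ is a Riemannian submersion with the horizontal/vertical splitting available — the two steps above constitute the complete argument.
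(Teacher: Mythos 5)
Your proposal is correct and follows essentially the same route as the paper's proof: combine the identity \eqref{grad_expr2} with the definition \eqref{induced_metric_s1} of the inner product, and recognize the resulting characterization as saying that $c'\nabla_\ph E$ is the $G_c$-orthogonal projection of $-\on{Log}_c(d\circ\ph\i)$ onto $\on{Ver}(c)$, using the identification $\mu\mapsto c'\mu$ of $\mf X(S^1)$ with the vertical space. Your additional remarks on the orthogonality argument and on the existence of $\on{Log}$ and the vertical splitting only make explicit what the paper leaves implicit.
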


\begin{proof}
Note that the equation
\[ G_c(h, c' \mu) = -G_{c}(\on{Log}_{c} d\circ\ph\i,  c' \mu) \]
is satisfied for all $\mu \in \mf X(S^1)$ if and only if 
\[ h = -\on{Ver}_c\left( \on{Log}_c(d\circ\ph\i)\right)\,. \]
Each element in the vertical space corresponds via the infinitesimal action to one element of $\mf X(S^1)$. The theorem now follows from combining equations \eqref{grad_expr2} and \eqref{induced_metric_s1}.
\end{proof}

The next algorithm computes the element $\mu \in \X(S^1)$ corresponding to the orthogonal projection of a vector $h \in T_c \Imm(S^1, \R^2)$ to the vertical subspace $\on{Ver}(c)$.

\begin{algorithmic}
\Function{Ver}{$c, h$}
\State $\nu \gets \text{ test function on } \X(S^1)$
\State $\mu \gets \text{ solution of } G_c(c'\mu, c'\nu) = G_c(h, c'\nu)$
\State \textbf{return} $\mu$
\EndFunction
\end{algorithmic}
We use a finite element method with Lagrange elements of first order to numerically compute the vertical projection. A more explicit formula for the inner product $G_c(h,k)$ is given by
\[ G_c(h,k) = \int_{S^1} \frac{\langle h',k'\rangle}{|c'|} - \frac 34 \frac{\langle h', c'\rangle \langle k', c' \rangle}{|c'|^3} \,d\theta\,. \]

The algorithm to find geodesics between unparameterized shapes $C,D$ takes as input two parameterizations $c,d$ of these shapes such that $\pi(c)=C$ and $\pi(d)=D$ and finds the diffeomorphism $\ps \in \on{Diff}(S^1)$, such that the geodesic distance $\on{dist}(c, d\circ\ps)$ is minimal.

\begin{algorithmic}
\Function{SolveBVP}{$c, d,N$}
\State $\ps \gets \on{Id}_{S^1}$ \Comment Notation $\ps:=\ph\i$ in \eqref{eq:en_phi}
\While{$\on{dist}(c, d\circ\ps)$ is not minimal}
\State $h \gets $\Call{Log}{$c, d\circ\ps, N$}
\State $\mu \gets $\Call{Ver}{$c, h$} \Comment Notation $\mu := -\nabla_\ph E$
\State $\et \gets $\Call{Flow}{$-\mu,\on{Id}_{S^1},\al$}
\State $\ps \gets \ps\circ\et$
\EndWhile
\State \textbf{return} $\ps$
\EndFunction
\end{algorithmic}
To understand the algorithm note that $\ps$ corresponds to $\ph\i$ in Theorem \ref{thm:grad}. There is no need to compute $\ph$ itself, as only $\ph\i$ is necessary to compute the reparameterization of the curve $d$. In each iteration of the algorithm we first compute the gradient $\mu = -\nabla_\ph E$ with the help of Theorem \ref{thm:grad}. A continuous gradient descent would take the form
$\p_t \ph = -\nabla_\ph E \circ \ph\,$, while
a first order time-discretization would be
\[ \ph_{i+1} = \on{Fl}^\mu(\al,\on{Id}_{S^1})\circ\ph_i\,  \]
with $\on{Fl}^\mu(\al,\on{Id}_{S^1})$ denoting the flow of the vector field $\mu=-\nabla_\ph E$ up to time $\al$ starting from $\Id_{S^1}$ at time $0$. Since we are only interested in $\ps=\ph\i$, we can rewrite this as
\begin{align*}
\ps_{i+1} = \ph_{i+1}\i &= \ph_i\i \circ \on{Fl}^\mu(\al,\on{Id}_{S^1})\i \\
&= \ps_i \circ \on{Fl}^{-\mu}(\al,\on{Id}_{S^1})\,.
\end{align*}
The gradient descent step is repeated until the relative decrease of $\on{dist}(c,d\circ\ps)$ falls below a prescribed threshold.
Similarly to the function $\on{Log}_{q_0}(q_1)$ in Section \ref{sec:numerics}, the parameter $\al$ is the step-size and is chosen adaptively to ensure the distance decreases in each step.

\subsection{Adaptive grid refinement\label{sec:add}}

The behaviour of shortest paths between unparameterized curves can be understood as a combination of stretching and bending. Bending of a parameterized curve is numerically well behaved, since uniformly sampled curves will stay approximately uniformly sampled. However, stretching  tends to expand a very short section of the curve into a much larger one and will lead to a curve that is under-sampled in the expanded area, unless the original parameterization was chosen to counter this effect. See Figure~\ref{fig:add} for an example of this. 

In the gradient descent algorithm from Section \ref{sec:num_shape} we start with a curve that is sampled uniformly at the points 
\[ x_0 = 0, x_1 = \frac{2\pi}{n},\ldots, x_{n-1} = \frac{n-1}{n}2\pi\,. \]
At each iteration, given a grid $x_0,\ldots,x_{n-1}$ for the curve $c$, the curve $d$ would be sampled at the points $\ps(x_0),\ldots,\ps(x_{n-1})$. Stretching of the curve $c$ would correspond to a large derivative $\ps'(x)$ or a large distance $|\ps(x_{i+1}) - \ps(x_i)|$ between two consecutive points. We add points to the grid, whenever the distance exceeds that of a uniform grid, i.e.
\[ |\ps(x_{i+1}) - \ps(x_i)| > \frac{2\pi}{n_0}\,. \]
Here $n_0$ is the size of the original grid. By this we ensure that the target curve $d\circ\ps$ will not be under-sampled.

Similarly, we remove a point $x_i$ whenever the neighbouring points would be sufficient to provide enough resolution, i.e., these conditions are satisfied:
\[ |x_{i+1} - x_{i-1}| < \frac{2\pi}{n_0} \text{ and } |\ps(x_{i+1}) - \ps(x_{i-1})| < \frac{2\pi}{n_0} \:\:. \]

\subsection{Numerical Results}
\label{sec:results}

In order to compute the vertical projection of a tangent vector as described in Section~\ref{sec:num_shape} we used the finite element library FEniCS \cite{LoggWells10}. To find the optimal parameterization the gradient descent algorithm from Section~\ref{sec:num_shape} with a maximum of 100 iterations was used.

Figure~\ref{fig:results} shows examples of some of the geodesics. From a mathematical point of view we would expect the geodesics between two shapes to be symmetric, i.e., interchanging the curves $c_0$ and $c_1$, representing the shapes, should have no effect.  However, if we look at the second and third rows of the figure, we see that the path between the cow and the dog is not completely symmetrical. This behaviour originates in the incompletness of the metric. The optimal geodesic from the dog to the cow (shown backwards in the last line) shrinks a leg of the dog and at the same time grows another leg of the cow from the body, and such growth tends to originate from a single point. Thus, one point of the dog expands to create the whole leg of the cow, while the whole leg of the dog collapses to one point on the cow. 

In Figure~\ref{fig:add} we show a more pronounced example of this behaviour. The template shape $c_0$ is an ellipse and the target shape $c_1$ is an ellipse with the added fold both discretized by equidistantly spaced points. There is no problem to compute the geodesic between $c_0$ and $c_1$. However in the process of computing the geodesic between the shapes $\pi(c_0)$ and $\pi(c_1)$ we see that the horizontal geodesic wants to create the whole fold out of a single point on the ellipse. The development of this singularity means that there exists no geodesic in $\mathcal S$ between the shapes $\pi(c_0)$ and $\pi(c_1)$. This behaviour was also described in \cite[Sect. 4.2]{Michor111} for a related metric and it is a property of the Riemannian metric, rather than an artefact of the discretization.

The incompleteness of the metric has consequences for its use in shape analysis. If geodesics don't necessarily exist between any two shapes, then it is not possible to linearize shape space using the Riemannian exponential map. Statistical methods that are based on the exponential map will be blind to potentially large parts of shape space.

\begin{figure}
\begin{center}
\includegraphics[width=.24\textwidth]{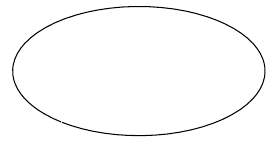}
\includegraphics[width=.24\textwidth]{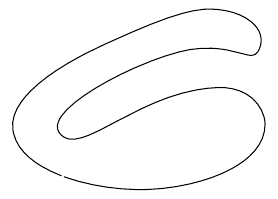}
\includegraphics[width=.24\textwidth]{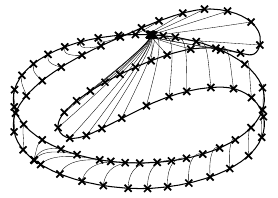}
\includegraphics[width=.24\textwidth]{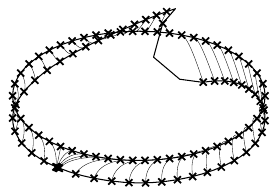}
\end{center}
\caption{This image shows the minimal geodesic between the two shapes in the first row. The template shape is an ellipse while the target shape is an ellipse with a large fold. The bottom left image shows the minimal geodesic computed with the grid adaptively refined as in Section~\ref{sec:add}. In the bottom right image the geodesic is computed without grid refinement, which leads to a loss of resolution along the fold. Since the fold is growing out of a point it is necessary to refine the grid in the neighbourhood of this point to accurately capture the fold.}
\label{fig:add}
\end{figure}

\begin{figure*}
\begin{center}
\includegraphics[width=\textwidth]{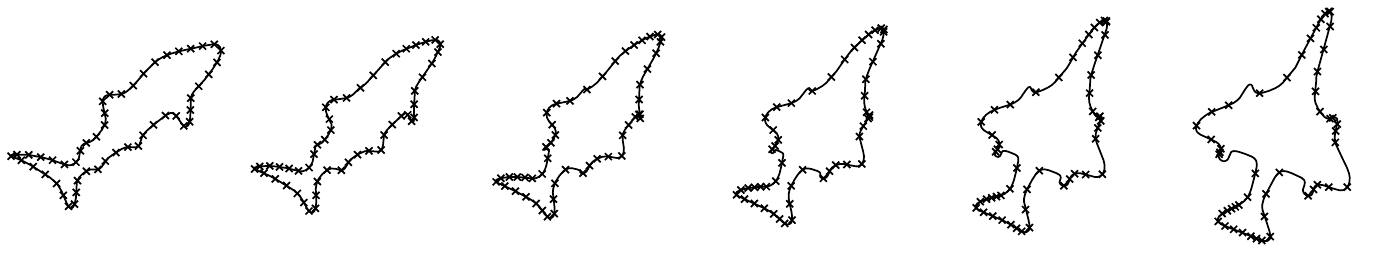}
\includegraphics[width=\textwidth]{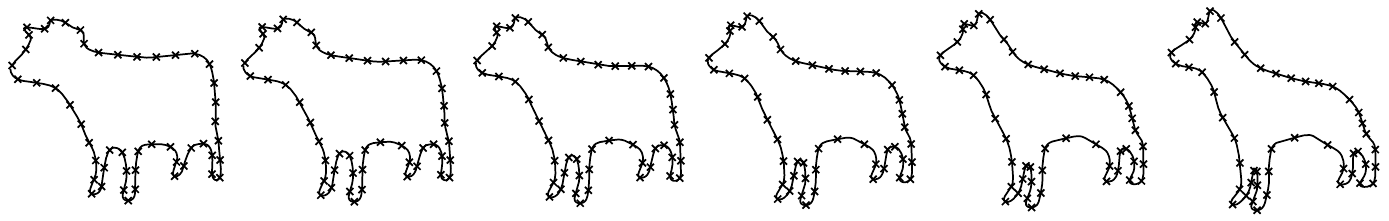}
\includegraphics[width=\textwidth]{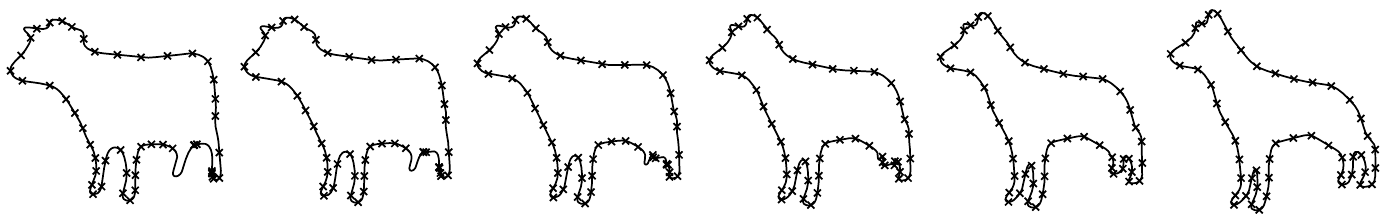}
\end{center}
\caption{Minimal geodesics between various shapes with optimized parameterization. The bottom row shows the backwards geodesic to the middle row (template and target curves reversed) but printed backwards, so that it can be more easily compared with the line above. Mathematically this should be perfectly symmetric, but since there is stretching and compression along the geodesics it is not, and the geodesic distances computed will differ.}
\label{fig:results}
\end{figure*}

\section{Conclusions}
\label{sec:conclusions}
Riemannian metrics on shape space of plane curves are of great interest in a wide variety of applications in image  analysis and computational anatomy. Due to the infinite-dimensional nature of shape space, metrics that allow for efficient computations of geodesics and distance between shapes are particularly useful.

In this paper we generalize the $R$-transform, first introduced in the work of \cite{Srivastava2011}, to the class of all elastic metrics whose coefficients satisfy $4b^2\geq a^2$, where $a$ and $b$ are parameters controlling the degree of bending and stretching of the curve respectively (see Section~\ref{sec:lit}). This transformation allows us to obtain efficient algorithms for computing the geodesic distance between shapes as well as to gain a better understanding of the geometry of the space. For the case of open curves we show that the space of parameterized open curves is a flat space in the sense of Riemannian geometry and obtain explicit formulas for geodesics. As a consequence, it follows that the shape space of unparameterized open curves has positive sectional curvature.
For closed curves the situation is more difficult, since the space of parameterized curves is not flat anymore, but the representation is still very useful from a numerical point of view.
 
We have presented numerical results for geodesics between both parametrized and unparameterized closed curves. For parameterized curves the computed metric is symmetric (that is, the distance between two curves $c$ and $d$ does not depend which is the source and which the target), while for unparametrized curves incompleteness of the space makes the task of computing geodesics much more difficult. Further work is required to understand this behaviour and to either develop numerical methods capable of dealing with this situation or to find Riemannian metrics for which the geodesic boundary value problem on shape space is solvable. We also plan to demonstrate the use of the metrics for the classification of sets of shapes.

\section*{Acknowledgments}
This research was partly supported by the FWF-Projects P2462511 and P21030-N13 as well as by an Advanced Grant from the European Research Council
and the Royal Society of New Zealand Marsden Fund. We thank Colin Cotter for  helpful discussions and valuable comments.

\bibliographystyle{abbrv}

\begin{thebibliography}{10}

\bibitem{Andersen1983}
H.~C. Andersen.
\newblock Rattle: A “velocity” version of the shake algorithm for molecular
  dynamics calculations.
\newblock {\em Journal of Computational Physics}, 52(1):24 -- 34, 1983.

\bibitem{MBMB2011}
M.~Bauer and M.~Bruveris.
\newblock A new {R}iemannian setting for surface registration.
\newblock In {\em 3nd MICCAI Workshop on Mathematical Foundations of
  Computational Anatomy}, pages 182--194, 2011.

\bibitem{Bauer2013a_preprint}
M.~Bauer, M.~Bruveris, and P.~W. Michor.
\newblock ${R}$-transforms for {S}obolev ${H^2}$-metrics on spaces of plane
  curves.
\newblock {\em \emph{To appear in:} Geometry, Imaging and Computing}, 2013.

\bibitem{Michor119}
M.~Bauer, P.~Harms, and P.~W. Michor.
\newblock Sobolev metrics on shape space of surfaces.
\newblock {\em J. Geom. Mech.}, 3(4):389--438, 2011.

\bibitem{Michor118}
M.~Bauer, P.~Harms, and P.~W. Michor.
\newblock Almost local metrics on shape space of hypersurfaces in n-space.
\newblock {\em SIAM J. Imaging Sci.}, 5:244--310, 2012.

\bibitem{Bauer2012b}
M.~Bauer, P.~Harms, and P.~W. Michor.
\newblock Curvature weighted metrics on shape space of hypersurfaces in
  {$n$}-space.
\newblock {\em Differential Geom. Appl.}, 30(1):33--41, 2012.

\bibitem{Michor125}
M.~Bauer, P.~Harms, and P.~W. Michor.
\newblock Sobolev metrics on shape space, ii: Weighted sobolev metrics and
  almost local metrics.
\newblock {\em J. Geom. Mech.}, 4(4):365 -- 383, 2012.

\bibitem{Charpiat2007}
G.~Charpiat, R.~Keriven, and O.~Faugeras.
\newblock Shape statistics for image segmentation with priors.
\newblock In {\em Conference on Computer Vison and Pattern Recognition}, 2007.

\bibitem{Cotter2012}
C.~{Cotter}, A.~{Clark}, and J.~{Peir\'{o}}.
\newblock A reparameterisation based approach to geodesic constrained solvers
  for curve matching.
\newblock {\em International Journal of Computer Vision}, 99:103--121, 2012.

\bibitem{Ebin1970}
D.~G. Ebin and J.~Marsden.
\newblock Groups of diffeomorphisms and the motion of an incompressible fluid.
\newblock {\em Ann. of Math. (2)}, 92:102--163, 1970.

\bibitem{Glaunes2008}
J.~Glaun\`es, A.~Qiu, M.~Miller, and L.~Younes.
\newblock Large deformation diffeomorphic metric curve mapping.
\newblock {\em International Journal of Computer Vision}, 80:317--336, 2008.

\bibitem{GrMi1998}
U.~Grenander and M.~I. Miller.
\newblock Computational anatomy: An emerging discipline.
\newblock {\em Quart. Appl. Math.}, 56:617--694, 1998.

\bibitem{Kurtek2011b}
S.~Kurtek, E.~Klassen, Z.~Ding, S.~Jacobson, J.~Jacobson, M.~Avison, and
  A.~Srivastava.
\newblock Parameterization-invariant shape comparisons of anatomical surfaces.
\newblock {\em IEEE Transactions on Medical Imaging}, 30(3):849--858, 2011.

\bibitem{Kurtek2011a}
S.~Kurtek, E.~Klassen, J.~Gore, Z.~Ding, and A.~Srivastava.
\newblock Elastic geodesic paths in shape space of parametrized surfaces.
\newblock {\em IEEE Transactions on Pattern Analysis and Machine Intelligence},
  PP(99):1, 2011.

\bibitem{LeimkuhlerReich2004}
B.~Leimkuhler and S.~Reich.
\newblock {\em Simulating {H}amiltonian Dynamics}.
\newblock Number~14 in Cambridge Monographs on Applied and Computational
  Mathematics. Springer, 2004.

\bibitem{LoggWells10}
A.~{Logg} and G.~Wells.
\newblock {DOLFIN}: Automated finite element computing.
\newblock {\em ACM Transactions on Mathematical Software}, 37(2), 2010.

\bibitem{Mani2010}
M.~Mani, S.~Kurtek, C.~Barillot, and A.~Srivastava.
\newblock A comprehensive {R}iemannian framework for the analysis of white
  matter fiber tracts.
\newblock In {\em IEEE Symposium on Biomedical Imaging}, pages 1101 --1104,
  2010.

\bibitem{Mennucci2008}
A.~C.~G. Mennucci, A.~Yezzi, and G.~Sundaramoorthi.
\newblock Properties of sobolev-type metrics in the space of curves.
\newblock {\em Interfaces and Free Boundaries}, 8(4):423--445, 2008.

\bibitem{MichorH}
P.~W. Michor.
\newblock {\em Topics in differential geometry}, volume~93 of {\em Graduate
  Studies in Mathematics}.
\newblock American Mathematical Society, Providence, RI, 2008.

\bibitem{Michor102}
P.~W. Michor and D.~Mumford.
\newblock Vanishing geodesic distance on spaces of submanifolds and
  diffeomorphisms.
\newblock {\em Doc. Math.}, 10:217--245 (electronic), 2005.

\bibitem{Michor98}
P.~W. Michor and D.~Mumford.
\newblock Riemannian geometries on spaces of plane curves.
\newblock {\em J. Eur. Math. Soc. (JEMS) 8 (2006), 1-48}, 2006.

\bibitem{Michor107}
P.~W. Michor and D.~Mumford.
\newblock An overview of the {R}iemannian metrics on spaces of curves using the
  {H}amiltonian approach.
\newblock {\em Appl. Comput. Harmon. Anal.}, 23(1):74--113, 2007.

\bibitem{Mio2007}
W.~Mio, A.~Srivastava, and S.~Joshi.
\newblock On shape of plane elastic curves.
\newblock {\em International Journal of Computer Vision}, 73:307--324, 2007.

\bibitem{Samir2012}
C.~Samir, P.-A. Absil, A.~Srivastava, and E.~Klassen.
\newblock A gradient-descent method for curve fitting on {R}iemannian
  manifolds.
\newblock {\em Found. Comput. Math.}, 12(1):49--73, 2012.

\bibitem{Shah2006}
J.~{Shah}.
\newblock An ${H}^2$ type {R}iemannian metric on the space of planar curves.
\newblock In {\em 1st MICCAI Workshop on Mathematical Foundations of
  Computational Anatomy}, pages 40--46, 2006.

\bibitem{Sharon06}
E.~Sharon and D.~Mumford.
\newblock {2D-Shape Analysis Using Conformal Mapping}.
\newblock {\em International Journal of Computer Vision}, 70(1):55--75, 2006.

\bibitem{Srivastava2011}
A.~Srivastava, E.~Klassen, S.~Joshi, and I.~Jermyn.
\newblock Shape analysis of elastic curves in {E}uclidean spaces.
\newblock {\em IEEE Transactions on Pattern Analysis and Machine Intelligence},
  33(7):1415--1428, 2011.

\bibitem{Sundaramoorthi2011}
G.~{Sundaramoorthi}, A.~{Mennucci}, S.~{Soatto}, and A.~{Yezzi}.
\newblock A new geometric metric in the space of curves, and applications to
  tracking deforming objects by prediction and filtering.
\newblock {\em SIAM Journal on Imaging Sciences}, 4(1):109--145, 2011.

\bibitem{Sundaramoorthi2007}
G.~Sundaramoorthi, A.~Yezzi, and A.~Mennucci.
\newblock Sobolev active contours.
\newblock {\em International Journal of Computer Vision}, 73:345--366, 2007.

\bibitem{Younes1998}
L.~Younes.
\newblock Computable elastic distances between shapes.
\newblock {\em SIAM J. Appl. Math.}, 58(2):565--586 (electronic), 1998.

\bibitem{Michor111}
L.~Younes, P.~W. Michor, J.~Shah, and D.~Mumford.
\newblock A metric on shape space with explicit geodesics.
\newblock {\em Atti Accad. Naz. Lincei Cl. Sci. Fis. Mat. Natur. Rend. Lincei
  (9) Mat. Appl.}, 19(1):25--57, 2008.

\end{thebibliography}

\end{document}